\documentclass[12pt]{article}
\usepackage[utf8]{}
\usepackage{amssymb,amsmath,amsthm,graphicx,xcolor,mathtools,enumerate,mathrsfs}
\usepackage{fullpage,enumitem}
\usepackage{thmtools}
\usepackage{thm-restate}
\usepackage{hyperref}

\usepackage{hyperref}
\hypersetup{colorlinks, linkcolor={black}, citecolor={black}, urlcolor={blue!50!black}}

\usepackage{cleveref}
\theoremstyle{plain}
\newtheorem{theorem}{Theorem}[section]
\crefname{theorem}{Theorem}{Theorems}

\newtheorem{proposition}[theorem]{Proposition}
\crefname{proposition}{Proposition}{Propositions}

\crefname{corollary}{Corollary}{Corollaries}

\newtheorem{lemma}[theorem]{Lemma}
\crefname{lemma}{Lemma}{Lemmas}

\newtheorem{conjecture}[theorem]{Conjecture}
\crefname{conjecture}{Conjecture}{Conjectures}

\newtheorem{problem}[theorem]{Problem}
\crefname{problem}{Problem}{Problem}

\crefname{claim}{Claim}{Claims}

\crefname{observation}{Observation}{Observations}

\crefname{setup}{Setup}{Setups}

\crefname{fact}{Fact}{Facts}

\crefname{algorithm}{Algorithm}{Algorithms}

\crefname{remark}{Remark}{Remarks}

\crefname{example}{Example}{Examples}

\theoremstyle{definition}

\crefname{definition}{Definition}{Definitions}

\crefname{construction}{Construction}{Constructions}

\newtheorem{question}[theorem]{Question}
\crefname{question}{Question}{Questions}

\numberwithin{equation}{section}

\definecolor{DarkDesaturatedBlue}{HTML}{3A3556}
\definecolor{VividOrange}{HTML}{F15918}
\definecolor{PureOrange}{HTML}{FFBA00}
\definecolor{LightGrayishPink}{HTML}{EEC5D5}
\definecolor{VerySoftBlue}{HTML}{B5AFDB}

\usepackage{tikz}
\usetikzlibrary{math}
\usetikzlibrary{calc,decorations.pathmorphing,through}
\tikzset{snake it/.style={decorate, decoration=snake}}

\definecolor{DarkDesaturatedBlue}{HTML}{3A3556}
\definecolor{VividOrange}{HTML}{F15918}
\definecolor{PureOrange}{HTML}{FFBA00}
\definecolor{LightGrayishPink}{HTML}{EEC5D5}
\definecolor{VerySoftBlue}{HTML}{B5AFDB}
\marginparwidth=0.8in

\DeclareMathOperator{\red}{red}
\DeclareMathOperator{\blue}{blue}
\DeclareMathOperator{\bin}{Bin}
\DeclareMathOperator{\tc}{tc}
\DeclareMathOperator{\tp}{tp}
\def\eps{\varepsilon}

\renewcommand{\subset}{\subseteq}

\title{Monochromatic partitions in 2-edge-coloured bipartite graphs
}

\author{Camila Fern\'andez\thanks{Laboratoire de Probabilit\'e, Statistique et Mod\'elisation (LPSM), Sorbonne University, 4 place Jussieu, 75005, Paris, France. Supported by FONDECYT Regular Grant 1180830 and by CMM CONICYT PIA AFB170001. {\it Email:} camila.fernandez.1@sorbonne-universite.fr} \and Mat\'ias Pavez-Sign\'e\thanks{Center for Mathematical Modeling (CNRS IRL2807), University of Chile.  Supported by ANID Basal Grant CMM FB210005. {\it Email:} mpavez@dim.uchile.cl} \and Maya Stein\thanks{ Department of Mathematical Engineering and Center for Mathematical Modeling (CNRS IRL2807), University of Chile. Supported by FONDECYT Regular Grant 1221905,  by ANID Basal Grant CMM FB210005,  and by MSCA-RISE-2020-101007705 project {\it RandNET}. {\it Email:} mstein@dim.uchile.cl}}

\begin{document}
\date{}
\maketitle

\begin{abstract}
We study two variations of the Gy\'arf\'as--Lehel conjecture on the minimum number of monochromatic components needed to cover an edge-coloured complete bipartite graph. Specifically, we show the following. 
\begin{itemize}
    \item For $p\gg (\log n/n)^{1/2}$, w.h.p.~every  $2$-colouring of the random bipartite graph $G\sim G(n,n,p)$ admits a cover of all but $O(1/p)$ vertices of $G$ using at most three vertex-disjoint monochromatic components. 
    \item For every $2$-colouring of a bipartite graph $G$ with parts of size $n$ and minimum degree $(13/16+o(1))n$, the vertices of $G$ can be covered using at most three vertex-disjoint monochromatic components.
\end{itemize}
\end{abstract}

\section{Introduction}
\subsection{History of tree covers and partitions}
An $r$-colouring of a graph $G$ is a colouring of the edges of $G$ with up to $r$ different colours. We are interested in determining the smallest number $\tp_r(G)$ ($\tc_r(G)$) such that in any $r$-colouring of $G$, the vertex set of $G$ can be partitioned (covered) using at most $\tp_r(G)$ ($\tc_r(G)$) monochromatic trees. There is a large amount of literature on this problem and its variants where trees are replaced by paths or cycles, and most of the attention has 
 focused on the case when $G$ is either the $n$-vertex complete graph $K_n$ or the complete bipartite graph $K_{n,m}$ with parts of size $n$ and $m$. 
We recommend the excellent surveys~\cite{fujita2015monochromatic,gyarfas2016vertex} for further reading.

Determining $\tc_r(K_n)$ is related to the celebrated conjecture~\cite{henderson1971permutation} of Ryser on matchings and transversals in hypergraphs (which was also conjectured by L\'ovasz~\cite{lovász1977kombinatorika}). Ryser's conjecture can be equivalently formulated as follows: The vertex set of any $r$-coloured graph $G$ can be covered by at most $(r- 1)\alpha(G)$ monochromatic trees, where $\alpha(G)$ denotes the independence number of $G$~\cite{erdHos1991vertex, gya77}. In particular, Ryser's conjecture would imply that 
$\tc_r(K_n)= r-1$ for all $n\ge 1$ and all $r\ge 2$, which is best possible for infinitely many values of $r$
(see e.g.~\cite{erdHos1991vertex}) and has been confirmed for $r\le 5$ and all $n\ge 1$ (see~\cite{duchet, gya77, tuza}). 
Further, it is easy to see that $\tc_r(K_n)\le r$, since the set of all maximal monochromatic stars having a fixed vertex $v$ as their centre clearly covers $V(K_n)$. 

Erd\H{o}s, Gy\'arf\'as, and Pyber~\cite{erdHos1991vertex} conjectured that the above bound holds even for tree {\it partitions}, that is, they conjectured that $\tp_r(K_n)=r-1$ for all $r\ge 2$. This conjecture has been confirmed for $r=2,3$~\cite{erdHos1991vertex}, and it is  known that $\tp_r(K_n)\le r$ if $n\ge n_0(r)$ is large enough~\cite{bal2017partitioning, haxell1996partitioning}. 

Let us now turn    to the complete bipartite graph $K_{n,m}$. It is easy to see that $\tc_r(K_{n,m})\le 2r-1$, as we can fix any edge $vw$ and consider the maximal monochromatic stars having $v$ or $w$ as their centre. Two of these stars are joined by the edge $vw$, and thus the total number of monochromatic trees needed to cover $K_{n,m}$ is at most $2r-1$. In~\cite{chen2012around}, this observation and the following conjecture are attributed to Gy\'arf\'as~\cite{gya77} and Lehel~\cite{lehel}.
\begin{conjecture}[Gy\'arf\'as and Lehel~\cite{gya77, lehel}]\label{conjecture:GL}
For all $n, m\ge 1$ and $r\ge 2$,  $\tc_r(K_{n,m})\le 2r-2$. 
\end{conjecture}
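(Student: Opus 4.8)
One natural route towards \Cref{conjecture:GL} is induction on the number of colours $r$, in the spirit of the arguments that settle Ryser's conjecture for small $r$; note that the target $2r-2$ improves the trivial bound $2r-1$ by only a single component, and a direct star-counting argument does not obviously save that component. The base case $r=2$ asserts $\tc_2(K_{n,m})\le 2$, which is classical and short: in any $2$-colouring of $K_{n,m}$, either one colour class is connected and spanning, or two monochromatic components of the same colour cover $V(K_{n,m})$, or one red component together with one blue component does, and which alternative occurs can be read off from the way a disconnected colour class separates the two sides.

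For the inductive step one would assume $r\ge 3$ and $\tc_{r-1}(K_{n',m'})\le 2r-4$ for all $n',m'$. Given an $r$-colouring of $K_{n,m}$ with parts $A,B$, the aim is to extract at most two monochromatic components of one colour, say colour $r$, with vertex sets $U_1$ and $U_2$, so that the complete bipartite graph on $\bigl(A\setminus(U_1\cup U_2),\,B\setminus(U_1\cup U_2)\bigr)$ carries no colour-$r$ edge. Then that leftover graph uses only $r-1$ colours, hence is covered by at most $2r-4$ monochromatic components by induction, and together with $U_1$ and $U_2$ this covers $K_{n,m}$ by at most $2r-2$ components. Locating $U_1,U_2$ is exactly the problem of finding a colour whose colour class (as a graph) has a vertex cover formed by the vertex sets of at most two of its connected components — in particular, a colour with at most two non-trivial components.

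This structural claim is the crux, and I expect it to be the main obstacle, since in its naive form it is \emph{false}: the ``cyclic'' colouring obtained by splitting each of $A$ and $B$ into $r$ blocks and colouring the edges between the $i$-th and $j$-th blocks by $i+j\pmod{r}$ makes every colour class split into $r$ non-trivial components, so no colour can be removed in a single step. A workable induction therefore must keep colour $r$ present in the leftover graph and control the interaction of colour $r$ with the remaining $r-1$ colours, using that between two distinct colour-$r$ components only $r-1$ colours occur; one then runs the covering argument on the auxiliary structure spanned by the colour-$r$ components and lifts the resulting cover back. The real difficulty is to make this terminate with a total of exactly $2r-2$ components, rather than the $\Theta(r\log n)$ that a greedy ``delete the largest monochromatic component'' loop would give. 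The most promising approach seems to be a Ryser-type fractional or affine-plane argument mirroring the proof of $\tc_r(K_n)=r-1$ for small $r$, together with an extremal analysis identifying the colourings that could be tight; supplying this last ingredient is, I believe, precisely the gap that leaves the conjecture open in general.
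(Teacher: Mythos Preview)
The statement you are addressing is labelled a \emph{Conjecture} in the paper, not a theorem, and the paper gives no proof of it; it only records that the cases $r\le 5$ have been verified elsewhere and uses the conjecture as motivation for the results that follow. So there is no ``paper's own proof'' to compare your attempt against, and indeed your write-up is itself framed not as a proof but as a sketch of an approach together with its obstructions.

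Taken on its own terms, your proposal contains a genuine gap, which you correctly identify but do not close. The inductive scheme requires, at each step, stripping off at most two components of some colour so that the remaining complete bipartite subgraph uses only $r-1$ colours. Your own cyclic example (split each side into $r$ blocks and colour the $(i,j)$-block pair by $i+j\bmod r$) shows this is false: every colour class has $r$ non-trivial components and removing two of them never kills the colour. The suggested repair --- keep colour $r$ alive, pass to an auxiliary structure on its components, and ``lift'' a Ryser-type cover back --- is not carried out, and this is exactly where the problem is hard: the hypergraph of monochromatic components in the bipartite setting is not intersecting, so the fractional and affine-plane arguments that work for $K_n$ at small $r$ do not transfer directly. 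In short, you have located the obstacle accurately, but nothing in the proposal overcomes it; for $r\ge 6$ the conjecture is, as you say, open.
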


The conjecture has been verified for $r\le 5$ by
Chen, Fujita, Gy\'arfás, Lehel, and T\'oth~\cite{chen2012around}. 
Further, there are examples of $r$-colourings of which show that the bound of $2r-2$ in Conjecture~\ref{conjecture:GL} is best possible~\cite{chen2012around, gya77}. See Section~\ref{sec:gyalehvar} for more discussion of this conjecture.

\subsection{Tree covering problems in random graphs}
In 2017, Bal and DeBiasio~\cite{bal2017partitioning} initiated the study of covering problems in edge-colourings of the binomial random graph $G(n,p)$. They showed that, for $r\ge 2$ and $p\ll(r\log n/n)^{1/r}$, w.h.p.\footnote{We say that $G(n,p)$ satisfies a property $\mathcal P$ \textit{with high probability} (w.h.p.) if $\mathbb{P}(G(n,p)\in \mathcal P)=1-o(1)$ as $n$ tends to infinity. We say that $\hat{p}=\hat{p}(n)$ is the \textit{threshold} for the property $\mathcal P$, if $\mathbb P(G(n,p)\in \mathcal P)=1-o(1)$ when $p=\omega(\hat{p})$, and $\mathbb P(G(n,p)\in \mathcal{P})=o(1)$ if $p=o( \hat{p})$.}~there is an $r$-colouring of $G(n,p)$ which does not admit a cover with monochromatic trees whose number is bounded by some function in $r$. In 2021, Buci\'c, Kor\'andi, and Sudakov~\cite{bucic2021covering} showed that indeed $p=(\log n/n)^{1/r}$ is the threshold for the property of admitting a cover with $f(r)$  monochromatic trees for some function~$f$.

In view of the results from the previous subsection, it seems natural  to ask for the threshold of the property that $\tc_r(G(n,p))\le r$ or $\tp_r(G(n,p))\le r$. For $r=1$,  this corresponds to the threshold at which $G(n,p)$ becomes connected. Also note that, while in the deterministic setting it was conjectured that $\tp_r(K_n)\le r - 1$, it is not overly difficult to see that $r$ trees are needed if $p \le 1 - \varepsilon$, for any constant $\varepsilon >0$.
  
  Bal and DeBiasio~\cite{bal2017partitioning}
  conjectured that $\tp_r(G(n,p))\le r$ holds w.h.p.~when $p\ge (1+\varepsilon)(r\log n/n)^{1/r}$ and $\varepsilon >0$ is fixed. This was confirmed for $r=2$ by Kohayakawa, Mota, and Schacht~\cite{kohayakawa2019monochromatic}, and disproved for all $r\ge 3$ by Ebsen, Mota, and Schnitzer (see~\cite[Proposition~4.1]{kohayakawa2019monochromatic}) who showed that $\tp_r(G(n,p))\ge r+1$ holds w.h.p.~when $p\ll (\log n/n)^{1/(r+1)}$. For $r=3$, Badra\v{c} and Buci\'c~\cite{bradavc2023covering} showed that $p=(\log n/n)^{1/4}$ is indeed the threshold for $\tc_3(G(n,p))\le 3$, improving upon previous results from~\cite{bal2017partitioning, kohayakawa2021covering}.
However, as shown by Buci\'c, Kor\'andi, and Sudakov~\cite{bucic2021covering}, the threshold for $\tc_r(G(n,p))\le r$ is in general much higher, and lies somewhere between $(\log n/n)^{1/2^{r}}$ and $(\log n/n)^{\sqrt{r}/2^{r-2}}$.

A bipartite variant of these results, in the spirit of the Gy\'arf\'as--Lehel conjecture (Conjecture~\ref{conjecture:GL}) is still missing. In this setting,  the complete bipartite graph $K_{n,m}$ from Conjecture~\ref{conjecture:GL} should be replaced with a random bipartite graph $G(n,m,p)$, whose vertex set are disjoint copies of $[n]$ and $[m]$, respectively, with every possible edge between these two sets appearing independently with probabi\-li\-ty $p$.

We will focus here on the case $r=2$ and $n=m$. (See Sections~\ref{sec:unbalRandom} and~\ref{sec:moreColours} for a discussion of  other options.) Similarly as for complete graphs and $G(n,p)$, the bound given by Conjecture~\ref{conjecture:GL} does not carry over to $G(n,n,p)$, even when $p=1-o(1)$. Indeed, we  show that w.h.p.~$\tc_2(G(n,n,p))\ge 3$ when $1-p$ exceeds $3\log n/n$. 

\begin{proposition}\label{proposition:lowerbound}
For $p\le 1-{3\log n}/{n}$, w.h.p.~$\tc_2(G(n,n,p))\ge 3$.
\end{proposition}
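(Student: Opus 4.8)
The plan is to produce, with high probability, one explicit $2$-colouring of $G\sim G(n,n,p)$ witnessing $\tc_2(G)\ge 3$; remarkably, this colouring needs to exploit only two non-edges of $G$. Write $A,B$ for the two parts of $G$. The one structural fact we use is elementary: if monochromatic trees $T_1,T_2$ cover $V(G)$ then either they share a colour, say red, and then $V(T_1)\cup V(T_2)$ lies in the union of at most two components of the red graph; or one is red and one is blue, and then $V(G)=V(T_1)\cup V(T_2)$ lies in the union of one red component and one blue component. So it suffices to find a colouring for which (i) the red graph has at least three components, (ii) the blue graph has at least three components, and (iii) no union of a red component with a blue component equals $V(G)$.

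First I would handle the (minimal) probabilistic input. The number of non-edges of $G$ is $\mathrm{Bin}(n^2,1-p)$, with mean $(1-p)n^2>3n\log n$, so by a Chernoff bound w.h.p.\ $G$ has more than $n$ non-edges; when this happens, some vertex $y\in B$ has at least two non-neighbours $x,z\in A$ (otherwise the total number of non-edges, which equals $\sum_{b\in B}|A\setminus N_G(b)|$, would be at most $n$). In particular $xy\notin E(G)$. (Any bound of the form $1-p=\omega(1/n)$ would suffice here; the hypothesis $1-p>3\log n/n$ is generous.)

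Now colour every edge of $G$ incident with $x$ or with $y$ blue, and all other edges red; I would then verify (i)--(iii). Since all edges at $x$ and at $y$ are blue, the red graph is $G$ with $x,y$ turned into isolated vertices, so its components are $\{x\}$, $\{y\}$, and the components of $G-\{x,y\}$ — at least three. The blue graph is the union of the star at $x$ and the star at $y$, so its components are $Q_x:=\{x\}\cup N_G(x)$, $Q_y:=\{y\}\cup N_G(y)$, and a singleton for each remaining (blue-isolated) vertex; here $Q_x$ and $Q_y$ are disjoint because $xy\notin E(G)$ forces $N_G(x)\subseteq B\setminus\{y\}$ and $N_G(y)\subseteq A\setminus\{x\}$, while $z$ is blue-isolated (it lies in $A\setminus\{x\}$ and is not a neighbour of $y$), so again there are at least three components. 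For (iii), suppose $C_R\cup C_B=V(G)$ with $C_R$ a red and $C_B$ a blue component. No red component contains both $x$ and $y$, and no blue component contains both (they sit in $Q_x$ and $Q_y$ respectively). If $C_R$ is a component of $G-\{x,y\}$ then $\{x,y\}\subseteq C_B$, impossible; if $C_R=\{x\}$ then $C_B\supseteq V(G)\setminus\{x\}\supseteq\{y,z\}$, but $z$ is blue-isolated while $y\in Q_y$, impossible; the case $C_R=\{y\}$ is symmetric. Together with the first paragraph, this yields $\tc_2(G)\ge 3$.

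The step to watch — and the only place one could slip — is (iii): one must rule out \emph{every} red--blue pair of covering trees, including degenerate single-vertex trees (such as the tree $\{x\}$) and a large red tree spanning $G-\{x,y\}$ together with a blue tree mopping up $\{x,y\}$. All such attempts fail in the same way: any blue tree through $x$ lives inside $Q_x$, any blue tree through $y$ lives inside $Q_y$, $Q_x\cap Q_y=\emptyset$, and $z\notin Q_x\cup Q_y$, so two blue trees covering $\{x,y\}$ cannot also cover $z$; and no red tree meets $x$ or $y$ at all. Beyond this, everything is a routine Chernoff-type estimate.
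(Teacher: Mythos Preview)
Your argument is correct. Like the paper, you exploit a pair of non-adjacent vertices $x\in A$, $y\in B$ to build a bad colouring, but the two constructions differ. The paper partitions $V_1=\{r\}\cup N(b)\cup X$ and $V_2=\{b\}\cup N(r)\cup Y$ and colours the six resulting edge classes individually, then argues that any two components covering $r$ and $b$ must miss $Y$. Your colouring is more economical: make only the two stars at $x$ and $y$ blue and everything else red, so that $x,y$ are red-isolated while the blue graph decomposes into the disjoint stars $Q_x,Q_y$ together with at least one further isolated vertex $z$. Your probabilistic input is also lighter: you only need a single vertex with two non-neighbours (equivalently, more than $n$ non-edges), whereas the paper first shows that \emph{every} vertex has at least two non-neighbours. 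The paper's construction has the mild advantage that both colour classes are ``large'' and more symmetric, which may make the example feel more canonical; your construction has the advantage of a two-line description and a cleaner case analysis for ruling out a red--blue covering pair.
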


Thus motivated, we are interested in the threshold of the property $\tc_2(G(n,n,p))\le 3$. We believe that this threshold should be the same as for the $r=2$ case of the non-bipartite setting.

\begin{conjecture}\label{conjecture:2-colours}
The threshold for $\tc_2(G(n,n,p))\le 3$ is $\hat{p}=(\log n/n)^{1/2}$.     
\end{conjecture}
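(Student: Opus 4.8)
The plan is to establish the two halves of the threshold statement separately: that $\tc_2(G(n,n,p))\le 3$ holds w.h.p.\ when $p=\omega((\log n/n)^{1/2})$, and that $\tc_2(G(n,n,p))\ge 4$ holds w.h.p.\ when $p=o((\log n/n)^{1/2})$. Together with Proposition~\ref{proposition:lowerbound}, the first part would in fact give $\tc_2(G(n,n,p))=3$ whenever $(\log n/n)^{1/2}\ll p\le 1-3\log n/n$. Throughout, $A$ and $B$ denote the two sides of $G(n,n,p)$.

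\emph{Lower bound.} Assume $p=o((\log n/n)^{1/2})$. We may also assume $p=\omega(\log n/n)$: otherwise $G(n,n,p)$ w.h.p.\ has at least four isolated vertices (already for $p\le(1-\eps)\log n/n$), each needing its own tree, so $\tc_2\ge 4$; and in this range $G(n,n,p)$ is w.h.p.\ $4$-connected, so $G(n,n,p)-S$ is connected for every $3$-set $S$. A union bound over a fixed partition of $B$ into $\lfloor n/3\rfloor$ triples — whose indicator events are mutually independent, and each of which has probability exactly $\big((1-p)^2(1+2p)\big)^{n}=e^{-(3+o(1))np^2}=n^{-o(1)}$ (the chance that no vertex of $A$ is adjacent to two of its three vertices) — shows that w.h.p.\ some triple $\{b_1,b_2,b_3\}$ has pairwise disjoint neighbourhoods. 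Colour every edge at $b_1,b_2,b_3$ blue and every other edge red. The blue graph is then the vertex-disjoint union of the three stars centred at $b_1,b_2,b_3$, so it has exactly three components, while the red graph is connected on $A\cup(B\setminus\{b_1,b_2,b_3\})$ with $b_1,b_2,b_3$ as isolated vertices. In any cover of $V(G)$ by monochromatic trees, each tree meets at most one of $b_1,b_2,b_3$: a blue tree lies inside one blue component, and the only red tree containing some $b_i$ is the singleton $\{b_i\}$. Hence three monochromatic trees that cover $b_1,b_2,b_3$ must do so injectively, and therefore their union lies inside $\{b_1,b_2,b_3\}\cup N(b_1)\cup N(b_2)\cup N(b_3)$, a set of size $O(np)=o(n)$ that misses almost all of $A$. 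Thus $\tc_2(G(n,n,p))\ge 4$.

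\emph{Upper bound.} Assume $p=\omega((\log n/n)^{1/2})$. By our first main result, w.h.p.\ every $2$-colouring of $G(n,n,p)$ admits three vertex-disjoint monochromatic components $C_1,C_2,C_3$ — each spanning a monochromatic tree — whose union is $V(G)\setminus L$ for some $L$ with $|L|=O(1/p)=o(\sqrt n)$; fix a colouring and such components. It suffices to absorb each $v\in L$ into some $C_i$ while keeping monochromaticity. Since w.h.p.\ every vertex of $G(n,n,p)$ has $(1-o(1))np$ neighbours and $|L|=o(\sqrt n)\ll np$, every $v\in L$ has $(1-o(1))np$ neighbours inside $C_1\cup C_2\cup C_3$. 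If one such neighbour $u$ lies in a component $C_i$ whose colour equals that of the edge $uv$, then $C_i+uv$ is a monochromatic connected subgraph covering $v$, and $v$ is dealt with. This can fail only if, for every $i$, all edges from $v$ to $C_i$ avoid the colour of $C_i$; equivalently, every red edge at $v$ goes to a blue component among $C_1,C_2,C_3$ or into $L$, and every blue edge at $v$ goes to a red one or into $L$. An adversarial colouring may try to force this ``colour misalignment'' at some $v\in L$.

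\emph{The main obstacle.} This colour misalignment for the last $O(1/p)$ vertices is the crux, and I would not expect to settle it while treating our first main result as a black box; equivalently, the task is to upgrade that result from ``all but $O(1/p)$ vertices'' to ``all vertices''. The natural route is to re-run its proof while tracking the structure of $C_1,C_2,C_3$, and to show, in each of the cases that arise, either that the three components can be chosen so that every potential leftover vertex has a neighbour of the matching colour in the matching component, or that such a vertex can be absorbed along a short monochromatic path rather than a single edge — for example, if $v$ sends a red edge to $u\in C_j$ with $C_j$ blue, while $u$ sends a red edge into a red component $C_i$, then $v$ is covered by extending $C_i$ through $u$. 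Here one would exploit that for $p=\omega((\log n/n)^{1/2})$ every linear-sized vertex set of $G(n,n,p)$ w.h.p.\ spans, in each colour class, a well-connected subgraph receiving a positive fraction of the edges at $v$. The hard instances are the near-monochromatic colourings, those using very few edges of one colour: these are precisely the colourings for which the ``$O(1/p)$'' slack looks intrinsic to the present argument, and handling them is, I expect, where the real work lies.
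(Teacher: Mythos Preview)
The statement you are attempting is a \emph{conjecture} in the paper, not a theorem: the paper does not prove it. What the paper does prove is Theorem~\ref{thm:almost-cover}, which gives the full $0$-statement and only an \emph{approximate} $1$-statement (covering all but $O(1/p)$ vertices). You have correctly identified this: your lower bound is a complete argument, while for the upper bound you explicitly flag the absorption of the last $O(1/p)$ vertices as ``the main obstacle'' and do not claim to resolve it. That is precisely the status of the problem.

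On the lower bound, your approach differs from the paper's. The paper finds two pairs $\{u_1,v_1\}\subset V_1$ and $\{u_2,v_2\}\subset V_2\setminus(N(u_1)\cup N(v_1))$, each pair having no common neighbours, colours all edges at these four vertices red and the rest blue, and observes that the four vertices lie in four distinct red components and in no blue component. You instead find a single triple $\{b_1,b_2,b_3\}$ in one class with pairwise disjoint neighbourhoods, colour the three incident stars blue, and argue that any monochromatic tree through some $b_i$ is trapped inside $\{b_i\}\cup N(b_i)$, so three such trees together have size $O(np)=o(n)$. Both arguments are correct; yours is arguably more economical. Two minor points: the case split ``$p=\omega(\log n/n)$ versus isolated vertices'' is not needed, since your triple argument already works throughout the range $p=o((\log n/n)^{1/2})$ (and your reduction has a small gap anyway: ``not $\omega(\log n/n)$'' does not imply $p\le(1-\eps)\log n/n$); the paper sidesteps this entirely by invoking monotonicity to assume $p=c(\log n/n)^{1/2}$. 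Also, the connectedness of the red graph on $A\cup(B\setminus\{b_1,b_2,b_3\})$ plays no role in your counting argument and can be dropped.

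On the upper bound, your assessment matches the paper's: the gap between ``all but $O(1/p)$ vertices'' and ``all vertices'' is genuinely open, and the paper makes no attempt to close it. Your suggested line of attack (re-running the proof of Theorem~\ref{thm:almost-cover} while tracking enough structure to absorb leftovers along short monochromatic paths) is reasonable but speculative, and the paper offers nothing further here.
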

Evidence for Conjecture~\ref{conjecture:2-colours} is given by the following theorem, which contains the $0$-statement and an approximate form of the $1$-statement. 
\begin{theorem}\label{thm:almost-cover}There exist positive constants $c$ and $C$ such that the following holds.
\begin{enumerate}
    \item If $p\le c\left( {\log n}/{n} \right)^{1/2}$, then w.h.p.~$\tc_2(G(n,n,p))\ge 4$.
    \item If $p\ge C\left( {\log n}/{n} \right)^{1/2}$, then w.h.p., in every 2-colouring of $G\sim G(n,n,p)$, all but at most $200/p$ vertices can be covered using at most three vertex-disjoint monochromatic trees.
\end{enumerate}
\end{theorem}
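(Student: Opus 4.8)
The two parts are quite different, so I would handle them separately, and I expect the approximate $1$-statement (part~2) to carry most of the weight.

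For the $0$-statement (part~1), the plan is to exhibit, w.h.p.\ over $G\sim G(n,n,p)$, an explicit $2$-colouring that admits no cover by three vertex-disjoint monochromatic trees. The useful reformulation is that a cover by three monochromatic trees is exactly a partition $V(G)=U_1\sqcup U_2\sqcup U_3$ with each $U_i$ inducing a connected subgraph of a single colour; since three parts and two colours force two of the $U_i$ to share a colour, such a partition in particular produces a monochromatic connected set on at least $n/2$ vertices, and more generally expresses $V(G)$ as a union of at most three monochromatic connected sets, at least two of the same colour. A uniformly random colouring is useless here --- for $p\gg\log n/n$ each colour class is already connected w.h.p., so it would give $\tc_2=1$ --- and in fact the colouring must make \emph{both} colour classes disconnected, which is already impossible with ``balanced'' blocks once $p\gg\log n/n$. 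I would therefore partition $A$ and $B$ into a bounded number of parts of carefully chosen, deliberately unbalanced sizes, colour the induced bipartite blocks according to a fixed pattern, and use the sparsity of $G$ at this density (concretely $np^2\le c^2\log n$ with $c$ small, which makes $G$ contain moderately large edgeless rectangles and makes many short monochromatic path configurations absent) to ensure that the monochromatic connections one would need in order to merge the pieces into only three trees are simply not present in $G$. The delicate part is verifying that the construction works: this is a finite case analysis over the possible ``types'' of a hypothetical three-tree cover, each ruled out by the absence of the relevant monochromatic paths.

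For part~2, the core is an approximate, pseudorandom analogue of the classical structure theorem for $2$-colourings of $K_{n,n}$ (one colour is connected, or else the colouring is a ``balanced split'' with two monochromatic components of each colour). I would prove: w.h.p., in every $2$-colouring of $G$, after discarding a set of at most $O(1/p)$ exceptional vertices, \emph{either} (i) some colour spans a connected subgraph on all remaining vertices, \emph{or} (ii) there are partitions $A=A_1\sqcup A_2$ and $B=B_1\sqcup B_2$ with $|A_i|,|B_j|=\Omega(n)$ such that $G[A_1,B_1]$ and $G[A_2,B_2]$ are connected and red while $G[A_1,B_2]$ and $G[A_2,B_1]$ are connected and blue (up to the discarded vertices). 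The proof mirrors the classical argument --- if a colour is disconnected, consider its components, observe that the edges of $G$ between distinct components must all have the other colour, and analyse the structure this forces --- but each appeal to ``the complete bipartite graph between two sets is connected'' is replaced by ``$G$ between two sets is connected provided both sides are large enough''; the places where a side is too small are exactly where the $O(1/p)$ exceptional vertices are shed, and controlling this loss (so that it is $O(1/p)$ and not, say, $O(\log n/p)$) is the quantitatively delicate point and, I expect, the main obstacle of the whole argument.

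Given this structure, three trees come out cleanly. In case (i) a single monochromatic spanning tree of the connected subgraph covers all but the exceptional vertices, well within the $200/p$ slack. In case (ii), take a monochromatic spanning tree $T_1$ of the block between the two large parts --- say red $G[A_1,B_1]$; the uncovered vertices are then $A_2\cup B_2$ (minus exceptions), where $B_2$ is blue-joined to $A_1$ and $A_2$ is blue-joined to $B_1$. Using that $np^2=\Omega(\log n)$ guarantees that any two same-side vertices have a common neighbour even inside a linear-sized set on the other side, $B_2$ is blue-connected through length-$2$ paths with midpoints in $A_1$ and $A_2$ through midpoints in $B_1$; this gives a blue tree $T_2$ covering $B_2$ together with $O(|B_2|)$ vertices of $A_1$, a blue tree $T_3$ covering $A_2$ together with $O(|A_2|)$ vertices of $B_1$, and --- since $A_1$ and $B_1$ each lose only a $(1-\Omega(1))$-fraction of their vertices --- $T_1$ can be taken inside the part of $G[A_1,B_1]$ untouched by $T_2,T_3$, which is still connected. (This is where the third tree earns its keep: after the one large monochromatic tree, the two leftover sides naturally demand separate trees of the opposite colour, the same mechanism already forcing $\tc_2\ge 3$ in Proposition~\ref{proposition:lowerbound}.) In every case we obtain at most three vertex-disjoint monochromatic trees covering all but $O(1/p)\le 200/p$ vertices, as required.
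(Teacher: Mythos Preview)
Your plan for part~1 is much more elaborate than what the paper does, and the vagueness (``partition into a bounded number of parts of carefully chosen sizes, colour blocks by a fixed pattern, finite case analysis'') hides that you have not actually found a construction. The paper's argument is a two-line trick: since $np^2\le c^2\log n$, w.h.p.\ each side contains many pairs with no common neighbour; pick such a pair $u_1,v_1\in V_1$, then a second such pair $u_2,v_2\in V_2\setminus(N(u_1)\cup N(v_1))$, colour every edge incident to $\{u_1,v_1,u_2,v_2\}$ red and all remaining edges blue. The red graph then consists of four vertex-disjoint stars plus isolated vertices, and the four centres are isolated in blue, so any cover needs at least four components. No block pattern and no case analysis is required.

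For part~2 your route is genuinely different from the paper's, but as written it has two real gaps. First, the structure dichotomy you assert does not come for free. Even in $K_{n,n}$ the ``both colours disconnected $\Rightarrow$ criss-cross split'' statement places no lower bound on $|A_i|,|B_j|$; in $G(n,n,p)$ a red component can easily have, say, $n^{0.9}$ vertices on one side, which is far too large to discard within an $O(1/p)$ budget yet far too small to be the $\Omega(n)$ block your later argument needs. You acknowledge that controlling the loss is ``the quantitatively delicate point'', but you give no mechanism for it, and it is exactly here that a naive transfer of the $K_{n,n}$ argument breaks. Second, in your case~(ii) the sentence ``$T_2$ covers $B_2$ together with $O(|B_2|)$ vertices of $A_1$, and $A_1$ loses only a $(1-\Omega(1))$-fraction'' is inconsistent when the split is unbalanced: if $|A_1|=n/4$ and $|B_2|=3n/4$, an $O(|B_2|)$ bite can swallow all of $A_1$, leaving nothing for $T_1$. (This is repairable --- a dominating set of size $O(\log n/p)$ inside $A_1$ suffices and stays connected via common neighbours since $np^2\gg 1$ --- but the argument as you wrote it does not work.)

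By contrast, the paper avoids any global structure theorem. It fixes a red-heavy vertex $r\in V_1$ and a blue-heavy vertex $b\in V_2$, grows a red tree from $r$ and a blue tree from $b$, and handles the interface by locating a linear-in-$pn$ set $J_1\subseteq N_B(b)$ of ``joker'' vertices that have many red neighbours in $N_R(r)$; the colours of the jokers are then resampled uniformly at random, which with high probability lets every remaining vertex of $V_2$ (outside a set $K_2$ of size $\le 100/p$) attach to the correct tree. The still-uncovered vertices in $V_1$ are then absorbed either directly (Case~2) or via a third tree rooted at a single well-chosen vertex $\tilde v$, using a second joker set (Case~1). The whole argument is local and never needs the global split you are aiming for.
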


We prove Proposition~\ref{proposition:lowerbound} and Theorem~\ref{thm:almost-cover} in Section~\ref{sec:c}.

\subsection{Graphs of large minimum degree}
In~\cite{bal2017partitioning}, Bal and DeBiasio asked whether a minimum degree version of the tree covering problem exists, and proved that for every $r\ge 2$ there is a constant $\alpha_r\in (0,1)$ such that if~$G$ is an $n$-vertex graph with $\delta(G)\ge \alpha_r n$, then $\tc_r(G)\le r$. They also found an $r$-coloured $n$-vertex graph with minimum degree roughly ${rn}/(r+1)$ which cannot be covered with $r$ monochromatic trees and conjectured that this would be the worst-case scenario. 
\begin{conjecture}[Bal and DeBiasio~\cite{bal2017partitioning}]\label{conjecture:mindegree}Let $n,r\ge 2$. If $G$ is an $n$-vertex graph with 
\begin{equation}\label{equation:mindeg}\delta(G)>\frac{r}{r+1}(n-r-1),\end{equation}
then $\tc_r(G)\le r$.\end{conjecture}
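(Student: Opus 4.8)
The plan is to prove the base case $r=2$ in detail, since most of the ideas live there, and then comment on general $r$. Fix a $2$-colouring of $G$ with colour classes $R$ and $B$, and recall that a monochromatic tree covering a set $S$ is just a connected subgraph of $R$ or of $B$ whose vertex set contains $S$. In particular, if $R$ (or $B$) has at most two components, then two monochromatic trees of that colour already cover $G$, so I may assume that both $R$ and $B$ have at least three components; write $C_1\supseteq\dots\supseteq C_k$ ($k\ge3$) for the components of $R$ and $D_1\supseteq\dots\supseteq D_\ell$ ($\ell\ge3$) for those of $B$. Two facts will be used throughout: every $G$-edge joining two distinct $C_i$'s is blue (otherwise its ends would lie in the same red component), and symmetrically for the $D_j$'s; and, since $\delta(G)>\tfrac{2}{3}(n-3)$, every vertex of $G$ has fewer than $\tfrac{n+3}{3}$ non-neighbours.

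The goal is then to find a red spanning tree of some component $C_i$ together with a blue tree covering $V\setminus C_i$, or the colour-reversed version; such a blue tree exists precisely when $V\setminus C_i$ lies inside a single component of $B$. First I would dispose of the case where some colour has a large component, say $|C_1|>\tfrac{2n+6}{3}$: then $V\setminus C_1$ is small, each of its vertices sends at least $\delta(G)-|V\setminus C_1|+1>\tfrac{1}{2}|C_1|$ blue edges into $C_1$, so any two vertices of $V\setminus C_1$ have a common blue neighbour in $C_1$, whence $V\setminus C_1$ is blue-connected and we are done. After this (and its symmetric version) every colour component has size below roughly $\tfrac{2}{3}n$.

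The balanced case that remains is the crux, and I expect it to be the main obstacle. The natural tool is the multipartite structure: $B$ contains the $G$-induced complete multipartite graph on $C_1,\dots,C_k$, so if, for a particular choice of $C_i$, the set $V\setminus C_i$ is not blue-connected, then there is a partition of $V\setminus C_i$ into nonempty parts $X,Y$ such that every $G$-edge between $X$ and $Y$ has both ends in a common $C_j$; applying the non-neighbour bound to a vertex of $X$ in $C_j$ forces $|Y\setminus C_j|<\tfrac{n+3}{3}$, and symmetrically $|X\setminus C_{j'}|<\tfrac{n+3}{3}$. One would like to play these size inequalities --- ranging over all choices of $C_i$, and simultaneously over the analogous inequalities coming from the blue components $D_j$ --- against $|X|+|Y|=n-|C_i|$ until something breaks; but the bare multipartite graph can genuinely be disconnected for some choices of $C_i$, so the argument has to select $C_i$ (or $D_j$) cleverly and combine information from both colourings. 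This is exactly the regime where the constant $\tfrac{2}{3}$ is tight --- Bal and DeBiasio's extremal example essentially has three colour classes of size $\tfrac{n}{3}$ --- so there is no slack, and the small, irregular configurations (a few medium components whose pairwise blue bipartite graphs are only barely connected, or singleton components that must be threaded through bottleneck vertices) will have to be handled by hand.

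For general $r$ the natural route is some form of induction on $r$, peeling off one monochromatic tree or one colour class at a time while keeping enough minimum degree to continue. The obstruction --- and presumably the reason the conjecture is open for $r\ge 3$ --- is that such a reduction seems to degrade the minimum degree too fast relative to the threshold $\tfrac{r}{r+1}(n-r-1)$, so one is pushed toward a simultaneous analysis of all $r$ colour classes against the near-extremal $(r+1)$-partite configurations, which is considerably more delicate than the $r=2$ argument sketched above.
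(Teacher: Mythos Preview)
The statement you are trying to prove is presented in the paper as a \emph{conjecture}, not a theorem: the paper does not contain any proof of it. The $r=2$ case is attributed to Gir\~ao, Letzter, and Sahasrabudhe, but no argument is reproduced, and the case $r\ge 3$ is (as you yourself note at the end) open. So there is no ``paper's own proof'' to compare against.

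As for your proposal itself, it is not a proof even for $r=2$. The large-component case is fine, but in the balanced case you explicitly stop short: you say you ``would like to play these size inequalities\dots until something breaks'', that ``the argument has to select $C_i$ (or $D_j$) cleverly and combine information from both colourings'', and that the irregular configurations ``will have to be handled by hand''. None of this is actually carried out. The whole difficulty of the $r=2$ theorem sits precisely here, in the regime where every monochromatic component has size about $n/3$; your sketch does not supply the combinatorial argument that resolves it. In particular, the observation that a failure of blue-connectivity of $V\setminus C_i$ forces certain size bounds on pieces of a bipartition is correct but, as you acknowledge, does not by itself yield a contradiction without a careful choice of $C_i$ and a simultaneous use of the blue component structure --- and you do not provide that choice or that structural analysis.

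For general $r$ your comments amount to a (correct) explanation of why the problem is hard, not a proof strategy.
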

Shortly afterwards, Gir\~ao, Letzter, and Sahasrabudhe~\cite{Girao} proved a strengthening of Conjecture~\ref{conjecture:mindegree} for $r=2$, showing that every $n$-vertex 2-edge-coloured graph 
of minimum degree exceeding $(2n-5)/3$
can be {\it partitioned} into at most two monochromatic components. 

We propose a minimum degree version of the Gy\'arf\'as--Lehel conjecture (Conjecture~\ref{conjecture:GL}). While in Conjecture~\ref{conjecture:GL}, the bound on the number of monochromatic components is $2r-2$, we will need to work with the bound $2r-1$, since a non-complete bipartite host graph may need this many components, as witnessed by the random case (Proposition~\ref{proposition:lowerbound}).  
\begin{question}\label{question:mindeg}
What is the smallest number $\alpha_r>0$ such that if $G$ is a spanning subgraph of $K_{n,n}$ with minimum degree at least $\alpha_r n$, then $\tc_r(G)\le 2r-1$?
\end{question}
The same question can be asked for partitioning.
\begin{question}\label{question:mindeg2}
What is the smallest number $\beta_r>0$ such that if $G$ is a spanning subgraph of $K_{n,n}$ with minimum degree at least $\beta_r n$, then $\tp_r(G)\le 2r-1$?
\end{question}

Note that $\beta_r \ge \alpha_r>1/2$ for all $r\ge 2$. For this, it suffices to consider  the graph $G$ consisting of two disjoint copies of $K_{\frac{n}{2},\frac{n}{2}}$. Give each of these a colouring that cannot be covered by less than $r$ components to see that $\tc_r(G)\ge 2r$. (Such a colouring can be obtained by taking a proper edge-colouring of $K_{r,r}$ and blowing up one of the edges).

For two colours, we show that the answer to Question~\ref{question:mindeg2} is at most $13/16+o(1)$. Moreover, we show that in this setting, the vertex set can even be {\it partitioned} into at most three monochromatic components.

\begin{theorem}
\label{theorem:bipartite}
For every $\delta>0$, there is $n_0$ such that for every $n\ge n_0$ the following holds. If~$G$ is a spanning subgraph of $K_{n,n}$ with minimum degree at least $(13/16+\delta)n$, then $\tp_2(G)\le 3$.
\end{theorem}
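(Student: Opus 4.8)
The plan is to find, in any $2$-colouring of a balanced bipartite graph $G\subseteq K_{n,n}$ of minimum degree at least $(13/16+\delta)n$, a spanning structure consisting of at most three vertex-disjoint monochromatic components. Write $A,B$ for the two sides of $G$. For a vertex $v$ and a colour $c\in\{\red,\blue\}$, let $N_c(v)$ denote its colour-$c$ neighbourhood. The first step is a standard dichotomy: either one colour class, say the red graph $G_{\red}$, has a connected component $R$ that is \emph{large} on both sides (say, meeting at least $(1-\eta)n$ vertices on each side for a suitable small $\eta=\eta(\delta)$), or it does not. In the first case I would show $R$ can be extended to absorb the few leftover vertices on each side. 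Each leftover vertex $a\in A$ has at least $(13/16+\delta)n$ neighbours in $B$; if more than half of its edges (in $G$) are red, $a$ attaches to $R$ by a red edge (since $R$ covers almost all of $B$, and $a$ has $\ge(13/16+\delta)n - \eta n > 0$ red neighbours inside $R$ once $\eta$ is small), so $a$ joins the red component. The vertices that remain outside are then \emph{blue-heavy}: each has more than $(13/16+\delta)n/2 > n/2$ (in fact close to $(13/16+\delta)n$, after discarding the at most $2\eta n$ edges to the complement of $R$) blue neighbours on the opposite side. A set of blue-heavy vertices on both sides, each with blue-degree exceeding $n/2$ into the opposite side's blue-heavy set, induces a blue graph of high minimum relative degree, hence is connected; this gives a single additional blue component, for a total of two. (If only one side has leftover vertices, one extra blue star suffices.)

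The substantive case is when \emph{neither} colour has a component that is large on both sides. Here I would invoke the bipartite structure of monochromatic components more carefully. Fix any vertex $v\in A$; its red and blue neighbourhoods partition $B$, so one of them, say $N_{\red}(v)$, has size at least $\tfrac12(13/16+\delta)n$. The red component $R_v$ containing $v$ then contains all of $N_{\red}(v)$ on side $B$. By assumption $R_v$ is \emph{not} large on side $A$, so a set $A'\subseteq A$ of size at least $\eta n$ is disjoint from $R_v$; every vertex of $A'$ sends \emph{all} its edges into $N_{\blue}(v)$-or-outside, and in particular has essentially all of its neighbourhood inside the blue-to-$v$ part of $B$. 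I would then run the same argument from the other side and combine: the point is that ``not large on both sides'' for both colours forces the host graph to decompose into blocks whose density contradicts $\delta(G)\ge(13/16+\delta)n$ unless $n$ is split in a very restricted way, and $13/16$ is exactly the extremal ratio coming from optimising over such block decompositions (this is where the numerical constant, and the matching lower-bound construction of two copies of $K_{n/2,n/2}$ refined by the colour pattern, enter). In this regime one extracts a bounded number --- at most three --- of monochromatic components directly from the blocks.

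The delicate point, and where partition rather than cover matters, is making the at most three components \emph{vertex-disjoint}. The natural candidates (a big red component, plus one or two blue patches) may overlap. I would handle this by the usual swapping trick: if a vertex $u$ lies in both the chosen red component $R$ and a chosen blue component $S$, then removing $u$ from whichever of $R,S$ does not disconnect it, and iterating, keeps all pieces connected as long as we never delete a cut vertex; a short argument (each component we keep has, by construction, min relative degree well above $1/2$ on the relevant sides, so it stays connected after deleting any single vertex, and in fact after deleting a small linear-sized set) shows the disjointified pieces still cover everything. Getting the covering-to-partition upgrade to go through with the slack available under the $(13/16+\delta)n$ hypothesis is the main obstacle; the rest is a careful but routine case analysis of which colour is ``dominant'' on which side.

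I expect the hardest step to be the second case (neither colour large on both sides), where one must prove that the minimum-degree hypothesis with the specific constant $13/16$ rules out all bad configurations; the first case and the disjointification are comparatively mechanical once the right $\eta$ is chosen.
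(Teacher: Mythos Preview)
Your approach diverges entirely from the paper's, and as written it has substantive gaps. The paper does not split on whether some monochromatic component is large on both sides; instead it adapts the randomised colour-assignment technique from its proof of Theorem~\ref{thm:almost-cover}. One fixes a red-heavy vertex $r\in V_1$ and a blue-heavy vertex $b\in V_2$ (each with $\ge (9/16+o(1))n$ neighbours in its majority colour), takes subsets $X\subset N_R(r)$ and $Y\subset N_B(b)$ of that size, and extracts a ``joker'' set $J_Y\subset Y$ of size $\ge 3n/16$ whose vertices have substantial red degree into $X$. The point of the constant is that $\delta(G)\ge(13/16+\delta)n$ then forces \emph{every} vertex of $V_2$ to have $\ge\delta n$ neighbours in $J_Y$ (since $3/16-(1-13/16)>0$). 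Assigning each vertex of $J_Y$ a random colour lets every vertex of $V_2$ be attached, via $J_Y$, to whichever of the $r$-rooted red tree or the $b$-rooted blue tree its own majority colour dictates. A symmetric joker set on the other side handles $V_1$, and the resulting three trees are vertex-disjoint \emph{by construction}; no post-hoc disjointification is needed. The number $13/16$ falls out of the chain $9/16-(1-13/16)=3/8$, then half of $3/8\,n|Y|$ edges in the majority colour forcing $|J_Y|\ge 3n/16$, then $3/16-(1-13/16)\ge 0$; it is not claimed to be extremal.

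Your Case~2 is not a proof: you assert that ``not large on both sides'' for both colours forces a block decomposition whose optimisation yields $13/16$, but you give no decomposition, no inequality, and no mechanism by which three components emerge. This is the heart of the theorem and cannot be deferred to a remark that ``this is where the numerical constant enters''. Your disjointification step is also unsound as stated: you claim the kept components have relative minimum degree well above $1/2$, but a vertex of the red component $R$ may have red degree $1$ (the hypothesis bounds $\delta(G)$, not $\delta(G_R)$), so removing even one vertex can disconnect $R$. Even Case~1 has a slip: a leftover vertex $a\in A\setminus R$ is indeed blue-heavy, because all of its red edges must go to the tiny set $B\setminus R$; but its many blue neighbours lie almost entirely in $R\cap B$, not in the leftover set $B\setminus R$, so the leftover vertices do \emph{not} span a connected blue graph among themselves. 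The blue component containing them therefore meets $R$, and you are back to the disjointification problem you have not solved.
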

We prove Theorem~\ref{theorem:bipartite} in Section~\ref{sec:m}.


\section{Preliminaries}
We collect some graph theoretic notation, probabilistic inequalities and basic results on random bipartite graphs.

\subsection{Basic notation}
 Let $G$ be a graph with vertex set $V(G)$ and edge set $E(G)$.  For $A,B\subseteq V(G)$,  $G[A]$ is the graph induced by $A$, and, if $A$, $B$ are disjoint, $G[A,B]$ is the bipartite graph induced by $A$ and $B$, that is, the bipartite graph with parts $A$ and $B$, and all the edges $ab\in E(G)$ with $a\in A$ and $b\in B$. 
Write $e(G):=|E(G)|$, $e(A,B):=|E(G[A,B])|$ and  $e(A):=|E(G[A])|$. 
For  $x\in V(G)$ and $U\subseteq V(G)$, we write $N(x,U)$ for the set of neighbours of $x$ in $U$ and set $d(x,U):=|N(x,U)|$. If $U=V(G)$, we just write $N(x)$ and $d(x)$.
 When working with more than one graph, we add subscripts for the graph we are referring to, for example, $d_G(x)$ is the degree of a vertex $x$ in the graph~$G$.
 
 Given a $2$-colouring of $G$, with colours red and blue, we let $G_R$ and $G_B$ denote the subgraphs consisting of the red and blue edges, respectively. We will use the subscript $R$ and $B$ instead of $G_R$ and $G_B$ when referring to the red and blue subgraph, respectively. For example, for a vertex $x\in V(G)$, we write $d_R(x)$ and $d_B(x)$ instead of $d_{G_R}(x)$ and $d_{G_B}(x)$.

For   $a,b\in\mathbb R$ and $c>0$, we write $a=b\pm c$ to denote that $a$ satisfies $b-c\le a\le b+c$, while $a\ll b$ means that given $b$ one can choose $a$ sufficiently small so that all the following relevant statements hold. We omit   
floors and ceilings if this does not affect the proofs.

\subsection{Probabilistic inequalities}
We will use the following standard probabilistic inequalities (see~\cite{JLR2000} for instance). 
\begin{lemma}[Markov's inequality] 
\label{markov}
Let $X$ be a non-negative random variable. Then, for any $\lambda>0$, we have $\mathbb P(X\ge \lambda)\le \mathbb E X/\lambda.$
\end{lemma}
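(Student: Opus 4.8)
The plan is to derive the bound from the elementary pointwise inequality
$X\ge \lambda\cdot \mathbf 1_{\{X\ge \lambda\}}$, which is valid precisely because $X$ is non-negative, and then apply monotonicity of the expectation. Fix $\lambda>0$, let $A:=\{X\ge \lambda\}$ be the corresponding event, and let $\mathbf 1_A$ be its indicator random variable. If $\mathbb E X=\infty$ the inequality is trivially true, so we may assume $\mathbb E X<\infty$.

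First I would check the pointwise inequality $X\ge \lambda\,\mathbf 1_A$, splitting into two cases according to whether $A$ occurs. On $A$ we have $X\ge \lambda=\lambda\cdot\mathbf 1_A$ by definition of $A$; on the complement $A^c$ we have $X\ge 0=\lambda\cdot\mathbf 1_A$, where this last step uses that $X$ is non-negative. Hence $X\ge \lambda\,\mathbf 1_A$ holds everywhere on the sample space.

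Next I would take expectations on both sides. By monotonicity and linearity of expectation, $\mathbb E X\ge \lambda\,\mathbb E[\mathbf 1_A]=\lambda\,\mathbb P(A)=\lambda\,\mathbb P(X\ge \lambda)$, using the standard identity $\mathbb E[\mathbf 1_A]=\mathbb P(A)$. Dividing through by $\lambda>0$ gives $\mathbb P(X\ge \lambda)\le \mathbb E X/\lambda$, which is the assertion.

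I do not expect any real obstacle: the argument is purely formal once the pointwise bound is in place. The only point that genuinely uses a hypothesis is the non-negativity of $X$, which is exactly what makes $X\ge \lambda\,\mathbf 1_A$ hold on $A^c$; the statement fails without it, as the constant random variable $X\equiv -1$ already shows.
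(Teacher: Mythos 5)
Your proof is correct and is the canonical textbook argument for Markov's inequality: the pointwise bound $X\ge\lambda\,\mathbf 1_{\{X\ge\lambda\}}$ (which needs non-negativity on the complement event), followed by monotonicity of expectation. The paper does not supply its own proof of this lemma --- it merely cites a standard reference for the collected probabilistic inequalities --- so there is nothing to compare against; your argument is exactly the one any such reference would give, and the separate treatment of the case $\mathbb{E}X=\infty$, while harmless, is not even needed since the inequality $\mathbb{E}X\ge\lambda\,\mathbb{P}(A)$ holds in that case as well.
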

\begin{lemma}[Chernoff's bound]\label{lemma:chernoff}
Let $X$ be a binomial random variable.
\begin{enumerate}[label=\upshape(\roman{enumi})]

\item If $0<\delta<1$, then $\mathbb P(X\le (1-\delta)\mathbb EX)\le e^{-\delta^2\mathbb EX/2}.$
\item If $0<\delta\le 3/2$, then $\mathbb{P}(|X-\mathbb{E}X|\ge \delta \mathbb{E}X)\le 2e^{-\delta^2\mathbb{E}X/3}.$
\end{enumerate}
\end{lemma}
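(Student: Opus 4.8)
The plan is to prove both bounds by the standard exponential-moment (Chernoff) method: bound the tail probability by applying Markov's inequality (\cref{markov}) to $e^{\pm tX}$, use independence to factor the moment generating function, optimise over the parameter $t>0$, and then control the resulting transcendental exponent by an elementary real-variable inequality. Throughout, write $X=\sum_{i=1}^n X_i$ as a sum of independent Bernoulli variables, put $\mu:=\mathbb EX$, and record the workhorse estimate $1+x\le e^x$, which gives $\mathbb E[e^{sX_i}]=1+p_i(e^s-1)\le \exp\!\big(p_i(e^s-1)\big)$ for every real $s$, and hence, by independence, $\mathbb E[e^{sX}]\le \exp\!\big(\mu(e^s-1)\big)$.

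For part (i), I would apply this with $s=-t$ for $t>0$. Markov's inequality yields
\[
\mathbb P\big(X\le (1-\delta)\mu\big)=\mathbb P\big(e^{-tX}\ge e^{-t(1-\delta)\mu}\big)\le \exp\!\Big(\mu(e^{-t}-1)+t(1-\delta)\mu\Big).
\]
Choosing $t=-\ln(1-\delta)$ (legitimate since $0<\delta<1$) turns the exponent into $-\mu\big(\delta+(1-\delta)\ln(1-\delta)\big)$, so it remains to verify the elementary inequality $(1-\delta)\ln(1-\delta)\ge -\delta+\delta^2/2$ on $(0,1)$. This follows because the difference vanishes at $\delta=0$ and has nonnegative derivative there (using $-\ln(1-\delta)\ge \delta$), giving the exponent $\le -\delta^2\mu/2$, as required.

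For part (ii), the upper tail $\mathbb P\big(X\ge (1+\delta)\mu\big)$ is handled by the mirror computation with $s=t=\ln(1+\delta)>0$, which reduces the claim to $(1+\delta)\ln(1+\delta)-\delta\ge \delta^2/3$ for $0<\delta\le 3/2$. The lower tail $\mathbb P\big(X\le (1-\delta)\mu\big)$ is already covered by part (i), since $e^{-\delta^2\mu/2}\le e^{-\delta^2\mu/3}$ (and for $\delta\ge 1$ the event is empty, so only $0<\delta<1$ matters there); a union bound over the two one-sided events then produces the factor $2$. The one point needing genuine care is the real-variable inequality for the upper tail: the function $\varphi(\delta):=(1+\delta)\ln(1+\delta)-\delta-\delta^2/3$ vanishes together with its first derivative at $\delta=0$ but is \emph{not} monotone on $[0,3/2]$, so one cannot finish from the behaviour at $0$ alone. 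Instead I would note that $\varphi''(\delta)=\tfrac{1}{1+\delta}-\tfrac23$ changes sign exactly once (at $\delta=1/2$), so $\varphi'$ first increases then decreases, whence $\varphi$ increases then decreases on $[0,3/2]$; combining this with $\varphi(0)=0$ and the numerical check $\varphi(3/2)=\tfrac52\ln\tfrac52-\tfrac94\ge 0$ shows $\varphi\ge 0$ on the whole interval. This endpoint verification is the only mildly delicate step; everything else is routine.
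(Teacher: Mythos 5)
The paper does not prove this lemma at all---it is stated as a standard fact and cited to a reference textbook ([JLR2000]), so there is no ``paper proof'' to compare against. Your proposal supplies the canonical exponential-moment (Cram\'er--Chernoff) argument, and it is essentially correct: the MGF bound $\mathbb E[e^{sX}]\le\exp(\mu(e^s-1))$, the optimisation $t=\mp\ln(1\mp\delta)$, and the reduction to the two real-variable inequalities $\delta+(1-\delta)\ln(1-\delta)\ge\delta^2/2$ and $(1+\delta)\ln(1+\delta)-\delta\ge\delta^2/3$ are all standard and correctly executed, including the observation that the latter function is not monotone on $[0,3/2]$ and must be handled via convexity of $\varphi'$ plus an endpoint check.

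Two small points deserve cleanup. First, in part (i) the phrase ``has nonnegative derivative there'' reads as if you only check the derivative at $\delta=0$, which would not suffice; what you actually use (and should say) is that the derivative of $g(\delta)=(1-\delta)\ln(1-\delta)+\delta-\delta^2/2$ equals $-\ln(1-\delta)-\delta\ge0$ for \emph{all} $\delta\in(0,1)$. Second, your parenthetical ``for $\delta\ge1$ the event is empty'' is not quite right at $\delta=1$: there the lower-tail event is $\{X\le0\}=\{X=0\}$, which has probability $(1-p)^n$, not zero. This is harmless---$(1-p)^n\le e^{-\mu}\le e^{-\delta^2\mu/3}$ at $\delta=1$, and for $\delta>1$ the event genuinely is empty since $X\ge0$---but the claim as written is false at the boundary and should be patched, e.g.\ by treating $\delta=1$ directly or by noting the event is empty only for $\delta>1$.
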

\begin{lemma}[Paley--Zygmund inequality]
\label{paley-zyg}
Let $X$ be a non-negative random variable with finite variance. Then, for any $0<\delta<1$, we have
\[\mathbb{P}(X\ge \delta\mathbb EX)\ge (1-\delta)^2\dfrac{(\mathbb
EX)^2}{\mathbb
EX^2}.\]
\end{lemma}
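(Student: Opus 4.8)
The plan is to derive the bound from a truncation of the first moment together with the Cauchy--Schwarz inequality. Assume $\mathbb{E}X>0$ (otherwise $X=0$ almost surely, $\mathbb{E}X^2=0$, and the statement is degenerate). Set $t:=\delta\,\mathbb{E}X$ and split
\[
\mathbb{E}X=\mathbb{E}\bigl[X\,\mathbf{1}_{\{X<t\}}\bigr]+\mathbb{E}\bigl[X\,\mathbf{1}_{\{X\ge t\}}\bigr].
\]
The first term is at most $t=\delta\,\mathbb{E}X$, since $X\ge 0$ and on the event $\{X<t\}$ we have $X<t$. For the second term, I would apply Cauchy--Schwarz to the random variables $X\,\mathbf{1}_{\{X\ge t\}}$ (written as $X\cdot\mathbf{1}_{\{X\ge t\}}$), obtaining
\[
\mathbb{E}\bigl[X\,\mathbf{1}_{\{X\ge t\}}\bigr]\le \sqrt{\mathbb{E}\bigl[X^2\bigr]}\cdot\sqrt{\mathbb{E}\bigl[\mathbf{1}_{\{X\ge t\}}^2\bigr]}=\sqrt{\mathbb{E}X^2}\cdot\sqrt{\mathbb{P}(X\ge t)},
\]
using $\mathbf{1}^2=\mathbf{1}$.

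Combining the two bounds gives $\mathbb{E}X\le \delta\,\mathbb{E}X+\sqrt{\mathbb{E}X^2}\,\sqrt{\mathbb{P}(X\ge t)}$, hence $(1-\delta)\mathbb{E}X\le \sqrt{\mathbb{E}X^2}\,\sqrt{\mathbb{P}(X\ge t)}$. Since $0<\delta<1$, the left-hand side is nonnegative, so I may square both sides and divide by $\mathbb{E}X^2$ (which is positive because $\mathbb{E}X>0$) to conclude
\[
\mathbb{P}(X\ge \delta\,\mathbb{E}X)\ge (1-\delta)^2\,\frac{(\mathbb{E}X)^2}{\mathbb{E}X^2},
\]
as desired. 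There is no real obstacle here; the only points requiring a word of care are the trivial bound on the truncated expectation (which uses $X\ge 0$) and the harmless degenerate case $\mathbb{E}X=0$. The finiteness of the variance guarantees $\mathbb{E}X^2<\infty$, so the Cauchy--Schwarz step is legitimate.
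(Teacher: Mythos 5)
The paper states the Paley--Zygmund inequality as a standard probabilistic tool (citing~\cite{JLR2000}) and gives no proof of its own, so there is nothing to compare against. Your argument is the canonical proof: truncate the first moment at $t=\delta\,\mathbb{E}X$, bound the low part trivially by $t$ using nonnegativity, and bound the high part by Cauchy--Schwarz to produce $\sqrt{\mathbb{E}X^2}\sqrt{\mathbb{P}(X\ge t)}$. All the details are in order, including the handling of the degenerate case $\mathbb{E}X=0$ and the observation that $\mathbb{E}X>0$ forces $\mathbb{E}X^2>0$ so the final division is legitimate. Correct and complete.
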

\subsection{Random graphs and graph properties}
In the \textit{binomial random graph} model $G(n,p)$ we consider a graph on vertex set $[n]$ where every possible edge appears independently with probability $p$. By a \textit{graph property} $\mathcal P$ we mean a collection of finite graphs, and we say that $\mathcal P$ is \textit{monotone} if $H$ is a spanning subgraph of $G$ and $H\in\mathcal P$ implies $G\in\mathcal P$.
Note that the property of $\tc_r$ being bounded from above by some fixed number $k$ is a monotone graph property.

The \textit{binomial random bipartite graph} model $G(n,m,p)$ is the bipartite graph whose vertex classes are disjoint copies of $[n]$ and $[m]$, respectively, and each possible edge appears independently with probability $p$. The terms \textit{with high probability} and \textit{threshold} are defined in the same way as for random graphs.

In the following lemma, we collect all the properties of random bipartite graphs that we will need for proving the approximate $1$-statement of Theorem~\ref{thm:almost-cover}. This lemma is a bipartite version of Lemma 2.1 in~\cite{kohayakawa2019monochromatic}, and most of the proofs are straightforward applications of Chernoff's bound. For the sake of completeness,  we include its proof in the appendix.

\begin{lemma}\label{lemma:prelim1}For every $0<\eps<1$, there exists a constant $C>0$ such that  for $p\ge  C(\log n/n)^{1/2}$, if  $G\sim G(n,n,p)$ has bipartition classes $V_1$ and $V_2$, then w.h.p.~the following properties hold.
\begin{enumerate}[label=\upshape(\roman{enumi})]
    \item For every $v,w\in V_1$ (resp. $u,w\in V_2$), we have $d(v)=(1\pm \eps)pn$ and $|N(v)\cap N(w)|=(1\pm \eps)p^2n$.
    \item For every $v\in V_2$ (resp. $v\in V_1$), $U\subseteq N(v)$ with $|U|\ge pn/100$, and $W\subseteq V_2$ (resp. $W\subseteq V_1$) with $|W|\ge 100/p$, we have $e(U,W)\ge p|U||W|/2$.
    \item For every $v\in V_2$ (resp. $v\in V_1)$ and $U\subseteq N(v)$ with $|U|\ge pn/100$, all but at most $100/p$ vertices $v'\in V_2$ (resp. $u\in V_1$) satisfy $d(v',U)\ge p^2n/200$.
    \item Every subgraph $H\subseteq G$ with $\delta(H)\ge (1/2+\eps)pn$ is connected.
\end{enumerate}
\end{lemma}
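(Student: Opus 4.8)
The plan is to verify the four properties in turn, in order of increasing difficulty: (i), (iv), (iii), (ii). One reduction is convenient at the outset: property (i) only strengthens, and property (iv) only strengthens, as $\eps$ decreases (while (ii) and (iii) do not involve $\eps$), so we may assume $\eps\le 1/10$. Throughout we use $pn\ge C\sqrt{n\log n}\gg\log n$ and $p^2n\ge C^2\log n$, and that $C$ may be taken large in terms of $\eps$. Property (i) is an immediate Chernoff computation: $d(v)\sim\bin(n,p)$ has mean $pn$ and hence equals $(1\pm\eps)pn$ except with probability $2e^{-\eps^2pn/3}=o(1/n)$; and $|N(v)\cap N(w)|\sim\bin(n,p^2)$ has mean $p^2n$ and hence equals $(1\pm\eps)p^2n$ except with probability $2e^{-\eps^2p^2n/3}\le 2n^{-\eps^2C^2/3}=o(n^{-2})$, the last step using $C>\sqrt6/\eps$. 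A union bound over the $O(n)$ vertices and $O(n^2)$ pairs gives (i).

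For property (iv), suppose $H\subseteq G$ with $\delta(H)\ge(1/2+\eps)pn$ is disconnected; grouping the components on the two sides of a cut yields a partition $V_1=A\sqcup A'$, $V_2=B\sqcup B'$ into nonempty parts such that $H$ has no edge between $A$ and $B'$ or between $A'$ and $B$. Since $N_H(x)$ is contained in the ``diagonal'' partner of the part containing $x$, we obtain $d_G(x,Y)\ge\delta(H)\ge(1/2+\eps)pn$ for all four pairs $(x,Y)$ among $A\times\{B\},\,B\times\{A\},\,A'\times\{B'\},\,B'\times\{A'\}$; in particular $|A|,|B|\ge(1/2+\eps)pn$, and by symmetry we may assume $|B|\le n/2$. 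For a fixed such partition the events $\{d_G(a,B)\ge(1/2+\eps)pn\}$ over $a\in A$ are independent, each of probability at most $\mathbb P(\bin(n/2,p)\ge(1+2\eps)pn/2)\le 2e^{-2\eps^2pn/3}$, so this partition is ``good'' with probability at most $(2e^{-2\eps^2pn/3})^{|A|}\le e^{-\eps^2C^2 n\log n/6}$, absorbing $2^{|A|}$ into the exponent (legitimate since $pn\gg1$). A union bound over the at most $2\cdot4^n$ relevant partitions is $o(1)$, because $n\log n\gg n$.

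For (iii) and (ii) the common device is that, since (i) holds w.h.p., it is enough to bound the failure probability on the event $|N(v)|\le(1+\eps)pn$ for all $v$; concretely, for the fixed vertex $v$ at hand we expose the edges at $v$ and assume this bound. The point is that the sets $U\subseteq N(v)$ with $|U|=u\ge pn/100$ then number at most $\binom{|N(v)|}{u}\le(100e(1+\eps))^{u}=e^{O(u)}$ — exponential in $u$, rather than in $u\log n$ as a union bound over arbitrary $u$-subsets of $V_1$ would cost; the latter is fatal because $u$ can be as small as $pn/100=\Theta(\sqrt{n\log n})$ while the Chernoff failure probabilities below are only $e^{-\Theta(\sqrt{n\log n})}$. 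For (iii): fix $v$ and $U\subseteq N(v)$ with $|U|=u\ge pn/100$; for $v'\neq v$ the variables $d_G(v',U)\sim\bin(u,p)$ are independent of the exposed edges at $v$ and of one another, and $\mathbb P(d_G(v',U)<p^2n/200)\le\mathbb P(\bin(u,p)<pu/2)\le e^{-pu/8}$ as $p^2n/200\le pu/2$. Hence the probability that some $100/p$ of the $v'$ are simultaneously below threshold is at most $\binom n{100/p}(e^{-pu/8})^{100/p}\le e^{(50/C)\sqrt{n\log n}}\cdot e^{-12.5u}$; summing over $u\ge pn/100$ (valid since $12.5>\log(100e(1+\eps))$) and over the $e^{O(pn)}$ pairs $(v,U)$ gives $o(1)$ once $C$ is large.

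Property (ii) is the crux. Here there is no monotonicity to exploit — reducing to the minimal $u$ via a covering argument would degrade the constant $\tfrac12$ — so one union-bounds over all pairs of sizes $u,w$ directly. Fix $v$, $U\subseteq N(v)$ with $|U|=u\ge pn/100$, and $W\subseteq V_2$ with $|W|=w\ge 100/p$. Every vertex of $U$ is adjacent to $v$, so $e(U,W)\ge e(U,W\setminus\{v\})\sim\bin(u(w-1),p)$, a variable independent of the exposed edges at $v$ with mean $\ge(1-p/100)puw$; Chernoff then yields $\mathbb P(e(U,W)<puw/2)\le e^{-c\,puw}$ for an absolute constant $c>0$ (one may take $c=1/10$, which comfortably absorbs the possible exceptional vertex $v\in W$). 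The number of triples with these sizes is at most $n\binom{(1+\eps)pn}{u}\binom nw\le n\,e^{O(u)}\,(en/w)^w$; since $pu\ge C^2\log n/100$ dominates $\log(en/w)$, for $C$ large one has $\binom nw e^{-c\,puw}\le e^{-c'puw}$ with $c'$ only slightly below $c$, so $\sum_{w\ge 100/p}\binom nw e^{-c\,puw}\le 2e^{-100c'u}$, and then $\sum_{u\ge pn/100}e^{O(u)}\cdot 2e^{-100c'u}$ converges since $100c'>\log(100e(1+\eps))$ for $C$ large; the remaining union bound over $v$ costs only a factor $n$, and the total is $o(1)$. This bookkeeping is the main obstacle: the number of admissible sets $U$ is $e^{\Theta(\sqrt{n\log n})}$, only a polynomial-in-the-exponent factor short of cancelling the reciprocal of the Chernoff failure probability $e^{-\Theta(\sqrt{n\log n})}$, so one genuinely needs all of the following for the two nested geometric sums to close: the restriction $U\subseteq N(v)$, the degree bound supplied by (i), $C$ sufficiently large, and $\eps$ sufficiently small.
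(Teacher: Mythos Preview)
Your proof is correct and follows the same Chernoff-plus-union-bound skeleton as the paper, but two of the four parts are organised differently, and the differences are worth recording. For (ii) you condition on the degree bound from (i) so that $\binom{|N(v)|}{u}\le(100e(1+\eps))^u$; the paper instead folds the probability $p^u$ of the event $U\subseteq N(v)$ directly into the union bound, obtaining $\binom{n}{u}p^u\le(enp/u)^u\le(100e)^u$ --- the two devices are interchangeable and give the same exponential-in-$u$ count. For (iii) the paper does not redo Chernoff at all: once (ii) is established, if the exceptional set $W=\{v':d(v',U)<p^2n/200\}$ had $|W|\ge100/p$ then (ii) would force $e(U,W)\ge p|U||W|/2$, yet summing the defining inequality over $W$ gives $e(U,W)<|W|\cdot p^2n/200\le p|U||W|/2$, a contradiction. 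This two-line deduction is worth knowing. For (iv) the paper first proves a uniform upper-tail edge bound $e(A,B)\le p|A||B|+\delta pn|A|$ for all large $A,B$ and then argues that each component of $H$ must occupy more than half of each side; your direct union bound over bipartitions, using independence of the events $\{d_G(a,B)\ge(1/2+\eps)pn\}$ across $a\in A$, is a legitimate alternative and arguably more self-contained. Your remark that $\eps$ must be small is not actually needed for the sums in (ii) and (iii) to close --- the paper's version works for all $\eps\in(0,1)$ --- but the reduction is harmless.
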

We remark that for ensuring the degree condition $d(x)=(1\pm\varepsilon)pn$ it suffices that $p\gg \log n/n$ rather than the stronger condition $p\gg (\log n/n)^{\frac{1}{2}}$.

The next lemma is crucial for the construction of a colouring that shows the $0$-statement in Theorem~\ref{thm:almost-cover}.

\begin{lemma}\label{lemma:prelim2}
Let $c>0$ be sufficiently small and let $p=c(\log n/n)^{1/2}$. Let $G\sim G(n,n,p)$ with bipartition classes $V_1$, $V_2$. Then, for $i\in[2]$, w.h.p.~there are at least $e^{-2c^2\log n}\binom{n}{2}$ pairs $\{u,v\}$ of distinct vertices from~$V_i$ such that $u,v$ have no common neighbours.   
\end{lemma}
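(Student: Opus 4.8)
The plan is to use a second-moment argument. Fix $i\in[2]$, say $i=1$, and let $X$ be the number of pairs $\{u,v\}$ of distinct vertices in $V_1$ with $N(u)\cap N(v)=\emptyset$. For a fixed pair $\{u,v\}$, a given vertex $w\in V_2$ is a common neighbour with probability $p^2$, so $\mathbb P(N(u)\cap N(v)=\emptyset)=(1-p^2)^n$. Using $1-p^2\ge e^{-p^2-p^4}$ for small $p$ and $p^2 n = c^2\log n$, we get $(1-p^2)^n\ge e^{-c^2\log n - o(1)} \ge e^{-(3/2)c^2\log n}$ for $c$ small enough (the precise constant in the exponent is flexible — any bound slightly better than $e^{-2c^2\log n}$ works, and in fact $(1-p^2)^n = n^{-(1+o(1))c^2}$). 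Hence $\mathbb E X \ge \tfrac12 e^{-(3/2)c^2\log n}\binom{n}{2}$ (with room to spare relative to the target $e^{-2c^2\log n}\binom n2$).

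The main work is to bound $\mathbb E X^2$ so that the Paley--Zygmund inequality (Lemma~\ref{paley-zyg}) gives concentration. Write $X=\sum_{\{u,v\}} \mathbf 1_{A_{uv}}$ where $A_{uv}$ is the event $N(u)\cap N(v)=\emptyset$. Then $\mathbb E X^2 = \sum_{\{u,v\},\{u',v'\}} \mathbb P(A_{uv}\cap A_{u'v'})$. Split according to $|\{u,v\}\cap\{u',v'\}|$. If the pairs are disjoint (the dominant term, with $\sim\binom n2^2$ summands), the events $A_{uv}$ and $A_{u'v'}$ are \emph{not} independent because they both constrain the same vertices of $V_2$, but the correlation is weak: for each $w\in V_2$, $\mathbb P(w\notin N(u)\cap N(v) \text{ and } w\notin N(u')\cap N(v'))=1-2p^2+p^4 \le (1-p^2)^2(1+O(p^4))$, so $\mathbb P(A_{uv}\cap A_{u'v'})\le (1-p^2)^{2n}(1+O(p^4))^n = (1-p^2)^{2n}e^{O(p^4 n)}$, and $p^4 n = c^4 (\log n)^2 / n = o(1)$. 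Thus disjoint pairs contribute at most $(1+o(1))(\mathbb E X)^2$. The terms where $\{u,v\}$ and $\{u',v'\}$ share exactly one vertex number only $O(n^3)$, and each is at most $\mathbb P(A_{uv}) = (1-p^2)^n \ge n^{-2c^2}$; so their total contribution is at most $O(n^{3})\cdot n^{-2c^2}\cdot n^{2c^2} $... more carefully, it is $O(n^3 (1-p^2)^n)$, which must be compared with $(\mathbb E X)^2 = \Theta(n^4 (1-p^2)^{2n}) = \Theta(n^{4-2c^2-o(1)})$; since $(1-p^2)^n = n^{-c^2-o(1)}$, the one-vertex-overlap terms contribute $O(n^{3-c^2-o(1)}) = o((\mathbb E X)^2)$ because $4-2c^2 > 3-c^2$ for $c^2<1$. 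The diagonal terms (both pairs equal) contribute exactly $\mathbb E X = o((\mathbb E X)^2)$ since $\mathbb E X\to\infty$. Altogether $\mathbb E X^2 \le (1+o(1))(\mathbb E X)^2$.

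Finally, applying the Paley--Zygmund inequality with $\delta = \tfrac12$ (say) gives
\[
\mathbb P\!\left(X \ge \tfrac12 \mathbb E X\right) \ge \frac{(\mathbb E X)^2}{4\,\mathbb E X^2} \ge \frac{1}{4(1+o(1))} \ge \frac{1}{5}
\]
for $n$ large — but this only yields constant probability, not w.h.p. To upgrade to w.h.p.\ I would instead observe that $X$ is a function of the edges with bounded differences (changing the edges at a single vertex of $V_1$ changes $X$ by at most $n-1$, and changing one edge changes $X$ by at most... well, a single edge $uw$ affects only the pairs containing $u$, so changes $X$ by at most $n-1$ as well — giving Lipschitz constant too large for McDiarmid to beat $\mathbb E X$). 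A cleaner route: expose $V_2$ one vertex at a time is not independent across pairs either. The most robust fix is to note $\mathrm{Var}(X)=\mathbb E X^2 - (\mathbb E X)^2 = o((\mathbb E X)^2)$ directly from the above, so Chebyshev's inequality gives $\mathbb P(X < \tfrac12\mathbb E X) \le \mathrm{Var}(X)/(\tfrac12 \mathbb E X)^2 = o(1)$, hence w.h.p.\ $X\ge \tfrac12\mathbb E X \ge e^{-2c^2\log n}\binom n2$, where the last inequality holds since $\tfrac12 e^{-(3/2)c^2\log n} \ge e^{-2c^2\log n}$ for $c$ small. A union bound over $i\in[2]$ completes the proof. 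The main obstacle is the second-moment estimate: one must verify that the multiplicative error $e^{O(p^4 n)}$ from the weak correlation of disjoint pairs is genuinely $1+o(1)$ (this is where $p=c(\log n/n)^{1/2}$ rather than a larger $p$ is used, since $p^4 n\to 0$), and that the lower-order overlap terms are dominated — both are routine once set up, but the bookkeeping of exponents in $n$ requires care.
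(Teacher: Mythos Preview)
Your proof is correct and follows essentially the same second-moment approach as the paper. Two small remarks. First, when $\{u,v\}$ and $\{u',v'\}$ are disjoint the events $A_{uv}$ and $A_{u'v'}$ are in fact independent (they depend on disjoint edge sets), as your own computation $1-2p^2+p^4=(1-p^2)^2$ confirms; the $e^{O(p^4 n)}$ correction factor is therefore identically~$1$ and that whole discussion is unnecessary. Second, the paper sidesteps your Paley--Zygmund/Chebyshev detour by applying Paley--Zygmund directly with the \emph{vanishing} parameter $\delta=e^{-c^2\log n}=o(1)$: since $\mathbb{E}X^2=(1+o(1))(\mathbb{E}X)^2$ this yields $\mathbb{P}(X\ge \delta\,\mathbb{E}X)\ge (1-\delta)^2(\mathbb{E}X)^2/\mathbb{E}X^2=1-o(1)$ in one line. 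Your Chebyshev finish is equally valid and arguably more transparent.
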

\begin{proof}
The probability that two distinct vertices $u,v\in V_i$ have no common neighbours is $(1-p^2)^n$. So, letting $X_i$ denote the random variable counting the number of such $\{u,v\}$, we have  
\[\mathbb EX=\binom{n}{2}(1-p^2)^n\le \binom{n}{2}e^{-p^2n},\]
and thus
\begin{align*}\mathbb EX^2 & =\binom{n}{2}\binom{n-2}{2}(1-p^2)^{2n}+\binom{n}{3}p^n(1-p^2)^n+\binom{n}{2}(1-p^2)^n=(1+o(1))(\mathbb
EX)^2. 
\end{align*}
Therefore, by the Paley--Zygmund inequality (Lemma~\ref{paley-zyg}) we have 
\[\mathbb P\left(X\ge e^{-2c^2\log n}\binom{n}{2}\right)\ge \mathbb P(X\ge e^{-c^2\log n}\mathbb EX)\ge (1-e^{-c^2\log n})^2\dfrac{(\mathbb EX)^2}{\mathbb EX^2}=1-o(1).\]
\end{proof}

\section{Tree covers in random bipartite graphs}
\label{sec:c}
In this section, we prove Proposition~\ref{proposition:lowerbound} and Theorem~\ref{thm:almost-cover}. We start with Proposition~\ref{proposition:lowerbound}.

\begin{proof}[Proof of Proposition~\ref{proposition:lowerbound}]
Let $G\sim G(n,n,p)$ and let $V_1$ and $V_2$ be the bipartition classes of $G$. We first note that w.h.p.~each vertex in $V_i$ has at least two non-neighbours in $V_{3-i}$, for $i\in [2]$. Indeed, for a vertex $v\in V_1$ (say), the probability that $v$ has less than 2 non-neighbours is \[p^n+np^{n-1}\le e^{-2\log n}+ne^{-3\log n}\le 2n^{-2},\]
and thus by a union-bound the probability that there is a vertex with less than 2 non-neighbours is at most $2n\cdot 2n^{-2}=o(1)$. So, we may pick, arbitrarily, vertices $r\in V_1$ and $b\in V_2\setminus N(r)$, and set $X=V_1\setminus (N(b)\cup\{r\})$ and $Y=V_2\setminus (N(r)\cup \{b\})$. Note that both $X$ and $Y$ are non-empty as both $r$ and $b$ have at least 2 non-neighbours. 

We colour in red the edges from $r$ to $N(r)$, the edges between $X$ and $N(r)$, and the edges between $Y$ and $N(b)$. We colour in blue the edges from $b$ to $N(b)$, the edges between $N(r)$ and $N(b)$, and the edges between $X$ and $Y$. In this colouring, no two components can cover all of $V(G)$ (the monochromatic components that cover  $r$ and $b$ cannot cover $Y$), which proves that $\tc_2(G)\ge 3$.
\end{proof}

The following proposition proves Theorem~\ref{thm:almost-cover}~(i).

\begin{proposition} Let $c>0$ be sufficiently small  and let $p\le c(\log n/n)^{1/2}$. Then w.h.p. $\tc_2(G(n,n,p))\ge 4$. 
\end{proposition}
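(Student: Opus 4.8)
The goal is to construct, for $p \le c(\log n/n)^{1/2}$ with $c$ small, a $2$-colouring of $G\sim G(n,n,p)$ such that w.h.p.~no three vertex-disjoint (indeed, no three at all) monochromatic trees cover $V(G)$. The natural strategy is to bootstrap the construction from the proof of Proposition~\ref{proposition:lowerbound}, using Lemma~\ref{lemma:prelim2} to find a large collection of "bad pairs" with no common neighbour, and then engineer the colouring so that covering the whole vertex set forces at least four components.

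**Step 1: Extract structure from Lemma~\ref{lemma:prelim2}.** Apply Lemma~\ref{lemma:prelim2} on both sides: w.h.p.~there are at least $e^{-2c^2\log n}\binom{n}{2} = n^{-2c^2}\binom{n}{2}$ pairs $\{u,v\}\subseteq V_1$ with no common neighbour, and likewise in $V_2$. Since $c$ is small, $n^{-2c^2}$ is only a tiny polynomial loss, so this is a dense set of pairs. The plan is to find within $V_1$ a pair $\{r_1, r_2\}$ with no common neighbour, and within $V_2$ a pair $\{b_1, b_2\}$ with no common neighbour, chosen so that the four associated "non-neighbourhood" sets are pairwise disjoint and non-empty — essentially running the Proposition~\ref{proposition:lowerbound} argument twice in parallel on disjoint parts of the graph. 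A counting/union-bound argument (or a direct second-moment computation) should show w.h.p.~we can pick $r_1, r_2 \in V_1$ and $b_1, b_2 \in V_2$ such that, setting $X_i = V_1 \setminus (N(b_i) \cup \{r_1, r_2\})$ and $Y_i = V_2 \setminus (N(r_i) \cup \{b_1, b_2\})$ for $i \in [2]$, these sets are all non-empty; non-emptiness follows because each vertex has (w.h.p.) many non-neighbours on the other side, and because $\{r_1,r_2\}$ and $\{b_1,b_2\}$ have no common neighbours, $N(r_1)\cup N(r_2)\ne V_2$ and similarly on the other side, actually one wants $V_2\setminus(N(r_1)\cup N(r_2))\ne\emptyset$ which is exactly the no-common-neighbour condition turned around.

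**Step 2: Build the colouring.** Colour the edges so that there are effectively two "linked obstructions." One clean way: designate $r_1$ as a red centre and $b_1$ as a blue centre as in Proposition~\ref{proposition:lowerbound}, producing one obstruction that forces a third component to cover some leftover set; then use $r_2, b_2$ to make that leftover set itself split, or to install a second, colour-disjoint obstruction that cannot be absorbed by the first three components no matter how they are chosen. Concretely, I would partition $N(r_1)$ and $N(b_1)$ and the residual sets further using the second pair, assigning colours on the various bipartite pieces $(X_i, N(r_j))$, $(Y_i, N(b_j))$, $(N(r_i), N(b_j))$, $(X_i, Y_j)$ so that: any red tree containing $r_1$ misses $Y_1$ and $Y_2$; any blue tree containing $b_1$ misses $X_1$ and $X_2$; and the connectivity within the red and blue graphs is arranged so that covering $X_1, X_2, Y_1, Y_2$ along with the rest genuinely needs four trees. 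The bookkeeping is to check every monochromatic tree in this colouring, show each lies "inside" a bounded list of components, and verify that the union of any three of them omits a vertex.

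**Main obstacle.** The hard part is the casework in Step 2: unlike Proposition~\ref{proposition:lowerbound}, where two trees manifestly cannot cover because of one explicitly exhibited uncovered set, here I must rule out \emph{all} ways that three monochromatic trees could conspire — including trees that are not centred at $r_i$ or $b_i$, and mixed configurations (two red and one blue, etc.). The design must be robust enough that the colour classes $G_R, G_B$ each break into enough pieces that three pieces never suffice, while still being a valid colouring of the \emph{random} graph $G$ (so I can only use edges that are actually present, which is where the "no common neighbour" structure and the degree bounds from Lemma~\ref{lemma:prelim1}-type estimates, valid already at $p \gg \log n / n$, come in to guarantee the relevant sets are the right sizes and the induced bipartite pieces behave). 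I would organize this as: (a) list the monochromatic components of the constructed colouring explicitly; (b) observe each is contained in one of a constant number of "types"; (c) a short combinatorial lemma showing no three types cover $V(G)$.
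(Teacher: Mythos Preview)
Your Step~1 is on the right track and matches the paper's setup: you correctly use Lemma~\ref{lemma:prelim2} to locate a pair $\{u_1,v_1\}\subset V_1$ with no common neighbour, and then a pair $\{u_2,v_2\}\subset V_2$ with no common neighbour. The paper adds one extra constraint that you only gesture at: it chooses $u_2,v_2\in V_2\setminus(N(u_1)\cup N(v_1))$, which is possible by the simple count
\[
e^{-2c^2\log n}\binom{n}{2}-(4pn)^2>0,
\]
using $d(u_1),d(v_1)\le 2pn$ w.h.p.

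The gap is Step~2. You propose a doubled Proposition~\ref{proposition:lowerbound}-style colouring and explicitly flag the component casework as the ``main obstacle''; you never actually specify the colouring, and the analysis (``rule out all ways three monochromatic trees could conspire'') is left undone. This is precisely the work that would constitute the proof, so as written the argument is incomplete.

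The paper sidesteps this obstacle entirely with a far simpler colouring: colour \emph{red} exactly the edges incident to $u_1,v_1,u_2,v_2$, and \emph{blue} all remaining edges. Then $u_1,v_1,u_2,v_2$ are isolated in blue. In red the edge set is a union of four stars; since $N(u_1)\cap N(v_1)=\emptyset$, $N(u_2)\cap N(v_2)=\emptyset$, and $u_2,v_2\notin N(u_1)\cup N(v_1)$ (equivalently $u_1,v_1\notin N(u_2)\cup N(v_2)$), these four stars are pairwise vertex-disjoint, so the four special vertices lie in four distinct red components. Hence every monochromatic component contains at most one of $u_1,v_1,u_2,v_2$, and $\tc_2(G)\ge 4$ with no casework at all. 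Your elaborate partition of $N(r_i)$, $N(b_i)$, $X_i$, $Y_i$ is unnecessary; the whole point is that once the four vertices are blue-isolated, the only thing left to arrange is that they sit in different red components, and the no-common-neighbour plus non-adjacency conditions give exactly that.
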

\begin{proof}
We note that by the monotonicity (of $\tc_2$ being bounded from below by $4$) we can assume that $p=c(\log n/n)^{1/2}$. Let $G\sim G(n,n,p)$ and let $V_1$ and $V_2$ be the bipartition classes of $G$. By Lemma~\ref{lemma:prelim1}, for each $i\in[2]$, w.h.p.~every vertex $v\in V_i$ satisfies $d(v)=(1\pm0.5)pn$. By Lemma~\ref{lemma:prelim2}, w.h.p.~there are at least $e^{-c^2\log n}\binom{n}{2}$ pairs of distinct vertices in $V_i$, for each $i\in[2]$, with no common neighbours. 

We now construct an edge-colouring of $G$ which can only be covered if at least four monochromatic components are used.  We choose $u_1,v_1\in V_1$ with no common neighbours. Then, we pick $u_2,v_2\in V_2\setminus (N(u_1)\cup N(v_1))$ with no common neighbours, which is possible because we have at least 
\[e^{-c^2\log n}\binom{n}{2}-(4pn)^2\ge e^{-c^2\log n}\binom{n}{2}-16c^2n\log n\]
options for $\{u_2,v_2\}$. We use red for all edges between $u_i,v_i$ and their neighbours, for $i\in [2]$, and blue for all the rest. Since $u_1, u_2, v_1, v_2$ do not belong to any blue component and lie in four distinct red components, we cannot cover $G$ with less than four components. Therefore $\tc_2(G)\ge 4$.
\end{proof}
The proof of Theorem~\ref{thm:almost-cover}~(ii) is captured in the following theorem, whose proof is inspired by the approach of Kohayakawa, Mota, and Schacht~\cite{kohayakawa2019monochromatic}.
\begin{theorem}There exists a  constant $C>0$ such that if $p\ge C\left( {\log n}/{n} \right)^{1/2}$, then w.h.p.~in every 2-colouring of $G\sim G(n,n,p)$, all but at most $200/p$ vertices of $G$ can be covered by at most three vertex-disjoint monochromatic trees.    
\end{theorem}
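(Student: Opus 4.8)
The plan is to condition on the w.h.p.\ event that $G\sim G(n,n,p)$ satisfies all four conclusions of Lemma~\ref{lemma:prelim1} (for a sufficiently small constant $\varepsilon>0$), to fix an arbitrary colouring of $E(G)$ with colours red and blue, and then to build the three trees explicitly, following the strategy of Kohayakawa, Mota and Schacht~\cite{kohayakawa2019monochromatic}: first a single large monochromatic tree $T_1$ reaching all but two ``leftover'' sets, then two monochromatic trees $T_2,T_3$ covering almost all of the leftover. The bound $200/p$ will come from applying part~(iii) of Lemma~\ref{lemma:prelim1} twice, each application costing at most $100/p$ uncovered vertices.

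For the first tree, recall that by part~(i) every vertex has degree $(1\pm\varepsilon)pn$, hence sends at least $pn/3$ edges of one colour; after a (mildly delicate) choice of vertex we may fix $x\in V_1$ with $A:=N_R(x)\subseteq V_2$ satisfying $pn/3\le|A|\le\tfrac{2}{3}pn$. Apply part~(iii) with $v=x$ and $U=A$: outside a set $Z$ with $|Z|\le 100/p$, every $u\in V_1$ has $d(u,A)\ge p^2n/200$, hence at least $p^2n/400$ neighbours of one colour in $A$. Let $U_R$ be the set of $u\in V_1\setminus(Z\cup\{x\})$ with $d_R(u,A)\ge p^2n/400$; each such $u$ has a red neighbour in $A\subseteq N_R(x)$, so lies in the red component of $x$. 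Let $T_1$ be the red tree on $\{x\}\cup A\cup U_R$ obtained from the red star at $x$ by joining each $u\in U_R$ through a red edge to $A$, and set $W_1:=V_1\setminus V(T_1)$, $W_2:=V_2\setminus A$. Then every $u\in W_1\setminus Z$ has no red neighbour in $A$, so $d_B(u,A)\ge p^2n/400$, and, crucially, $T_1$ has used up only the set $A$ on the $V_2$ side.

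It remains to cover all but $O(1/p)$ vertices of $W_1\cup W_2$ by two monochromatic trees, which (being disjoint from $T_1$, and $V(T_1)=\{x\}\cup A\cup U_R$) must live inside $W_1\cup W_2$. Here I would analyse the $2$-coloured bipartite graph $G[W_1,W_2]$: when both $W_i$ are large it inherits the pseudorandomness of Lemma~\ref{lemma:prelim1}, and when one is small it contributes at most $O(1/p)$ and may be discarded. The goal is a monochromatic-component structure yielding a two-tree cover of $G[W_1,W_2]$ up to $O(1/p)$: either one colour class spans all but $O(1/p)$ of it, or one colour class has exactly two large components, which become $T_2$ and $T_3$. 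A second use of part~(iii) — applied to a large monochromatic neighbourhood $B\subseteq W_2$ of a vertex $z\in W_1$ (such $B$ exists since $|A|\le\tfrac{2}{3}pn$ forces every vertex of $W_1$ to have $\Omega(pn)$ neighbours, hence $\Omega(pn)$ of one colour, in $W_2$) — is what lets the chosen components reach all but $O(1/p)$ of $W_1$ and $W_2$, and the two $O(1/p)$ losses add up to at most $200/p$.

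The main obstacle is exactly this last step, namely the disjointness constraint. A vertex that $T_1$ leaves out is guaranteed only to have $\Omega(p^2n)$ \emph{blue} neighbours inside $A$, and $A$ now belongs to $T_1$; moreover two such vertices cannot be linked through their blue neighbourhoods in $A$, because by part~(i) two vertices on the same side share only $(1\pm\varepsilon)p^2n$ neighbours in total — far too few to force a common blue one (the relevant second-moment count is of order $p^3n=o(1)$). Thus the cleanup trees must genuinely route their connections through $W_1\cup W_2$ rather than through $A$, and making the pseudorandom component-structure analysis of $G[W_1,W_2]$ watertight — while keeping the combined loss below $200/p$ — is the technical heart of the argument, paralleling the corresponding part of~\cite{kohayakawa2019monochromatic}. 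A minor additional point is to make the initial choice of $x$ (and its later analogues inside $G[W_1,W_2]$) compatible with the bound $|A|\le\tfrac{2}{3}pn$; we expect this needs only a short case distinction according to how skewed the red and blue degrees in $V_1$ are.
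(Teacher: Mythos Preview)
Your outline has a real gap at the step you call the main obstacle, and it is structural rather than merely technical. The tree $T_1=\{x\}\cup A\cup U_R$ meets $V_2$ only in $A$, with $|A|\le \tfrac{2}{3}pn=o(n)$, so $|W_2|\approx n$ always. But nothing forces $W_1$ to be large: membership in $U_R$ requires only $d_R(u,A)\ge p^2n/400$, i.e.\ roughly a $1/200$ fraction of $u$'s edges into $A$ to be red, so for many colourings $U_R$ swallows all of $V_1\setminus(Z\cup\{x\})$ and then $W_1=Z$ has at most $100/p$ vertices. In that regime any vertex of $W_2$ with no neighbour in $W_1$ is unreachable by trees disjoint from $T_1$, and there can be far more than $200/p$ such vertices (indeed all of $W_2$ when $W_1$ happens to be empty); your fallback of ``discarding the small side'' then strands $W_2$ entirely. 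Even when $W_1$ is large you give no argument for the asserted dichotomy that one colour class of $G[W_1,W_2]$ either almost spans or has exactly two large components, and there is no a priori reason a $2$-coloured bipartite leftover should admit a two-tree almost-cover. The reference to~\cite{kohayakawa2019monochromatic} does not help here: that paper builds two trees for all of $G(n,p)$ directly, not by reducing to a residual subgraph.

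The paper avoids this asymmetry by not committing to a single first tree. It picks $r\in V_1$ with large red degree and $b\in V_2$ with large blue degree, and grows a red tree from $r$ and a blue tree from $b$ in parallel. The key device is a ``joker'' set $J_1\subseteq N_B(b)$ of vertices that also have many red neighbours in $N_R(r)$; each joker could join either tree and is assigned red or blue independently and uniformly at random. A union bound then shows that every vertex of $V_2$ with $\Omega(p^2n)$ neighbours in $J_1$ --- all but at most $100/p$ of them, by part~(iii) --- finds a joker neighbour of its own preferred colour, so the two trees together cover all of $V_2$ up to $100/p$ exceptions. Only a subset of $V_1$ then remains, and a short case split either attaches each leftover $V_1$-vertex as a leaf of $T_1$ or $T_2$, or roots a third tree at one of them and repeats the joker trick (costing another $100/p$). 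This random assignment on the joker set is precisely the missing idea: it lets the two initial trees share the load on $V_2$ and prevents the unbalanced leftover that your one-tree-first approach produces.
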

\begin{proof}
Let $C$ be a sufficiently large constant. Since the property of the theorem (being almost coverable by $m$ monochromatic trees) is monotone, we can assume $p=C(\log n/n)^{1/2}$.
 Let $G\sim G(n,n,p)$, with partition classes $V_1$ and $V_2$. 
 
 For $0<\eps\ll 1$, by Lemma~\ref{lemma:prelim1} we know that w.h.p.~$G$ satisfies the following properties: 
\begin{enumerate}[label=(B\arabic{enumi})]
    \item\label{lem:i} For $i\in[2]$ and  $v,w\in V_i$, we have $d(v)=(1\pm \eps)pn$ and $|N(v)\cap N(w)|=(1\pm \eps)p^2n$.
    \item\label{lem:ii} For $i\in[2]$ and subsets $U\subseteq V_i$ and $W\subseteq V_{3-i}$, with $|U|\ge 100/p$ and $|W|\ge pn/100$, we have $e(U,W)\ge p|U||W|/2$.
    \item\label{lem:iii} For $i\in[2]$ and a subset $W\subseteq V_i$ with $|W|\ge pn/100$, all but at most $100/p$ vertices $v\in V_{3-i}$ satisfy $d(v,W)\ge p^2n/200$.
    \item\label{lem:iv} Every subgraph $H\subseteq G$ with $\delta(H)\ge (1/2+\eps)pn$ is connected.
\end{enumerate}
Moreover, as we may assume that $n$ is sufficiently large, we have that
 \begin{align}\label{eq:pbound}
  \displaystyle \max \big\{ (1+ \eps)p^2n, {100}/{p}  \big\} & \leq \frac{pn}{100}.
\end{align}
Suppose we are given a red and blue edge-colouring of $G$. If there is a monochromatic spanning component, we are done, so assume otherwise. Set $V_R=\{ v \in V(G) : d_R(v) > \frac{1}{3}d(v) \}$ and $V_B=\{ v \in V(G) : d_B(v) > \frac{1}{3} d(v) \}$. We claim that
\begin{equation}\label{eq:1}
    \text{$V_R\neq \emptyset\neq V_B$.}
\end{equation}
For contradiction, assume that $V_R=\emptyset$.  Then~\ref{lem:i} implies that for every $v \in V(G)$, 
\[d_B(v) \ge \frac{2}{3}(1-\eps)pn \ge \left(\frac{1}{2} + \eps\right)pn.\]
Therefore, by~\ref{lem:iv}, the blue graph $G_B$ is connected and thus $G$ has a monochromatic spanning tree. The same argument applies to $V_B$, which completes the proof of~\eqref{eq:1}.

By~\eqref{eq:1}, 
and after  possibly swapping the names of $V_1$ and $V_2$, we can  assume that there are vertices $$r \in V_1 \cap V_R\text{ and }b \in V_2 \cap V_B.$$  
(Indeed, if say $V_1$ only meets $V_R$ then $V_2$ necessarily meets  $V_B$, and the case that $V_1$ meets both sets is easy.)
We will define a red tree $T_1$ and a blue tree $T_2$ having $r$ and $b$ as their respective roots (and later, depending on the structure of the colouring, we might define a third tree $T_3$). To define $T_1$ and $T_2$, we will define a  function $\rho$ that assigns to  certain vertices $v$ a colour~$\rho(v)$. We only assign $v$ colour $\rho(v)$  if there are many monochromatic paths in colour $\rho(v)$ connecting $v$ with the root of the tree in colour $\rho(v)$. Later, $T_1$ will consist of all vertices $v$ with $\rho(v)=\red$ and $T_2$ will consist of all vertices $v$ with $\rho(v)=\blue$.

So let us define $\rho$. We set $\rho(r)= \red$ and $\rho(b)= \blue$, and   set
\[\rho(v) = \begin{cases} 
     \red & \text{if} \quad v \in N_R(r),\\
     \blue & \text{if} \quad v \in N_B(b). 
   \end{cases}
\]
Since $r\in V_R$ and $b\in V_B$, we have $|N_R(r)|,|N_B(b)|\ge {pn}/{100}$, which, together with~\ref{lem:ii}, implies  that $e(N_R(r), N_B(b)) \geq p|N_R(r)| |N_B(b)|/2$. By colour symmetry, we may assume without loss of generality that
 \begin{align}\label{eq:red:edges}
  e_R(N_R(r),N_B(b)) \geq \frac{p}{4} |N_R(r)| |N_B(b)|.
\end{align}
Let $$J_1=\{ v \in  N_B(b): |N_R(v) \cap N_R(r)| > {p^2n}/{25}\}.$$ The letter $J$ stands for the word `joker' as the vertices from $J_1$ are quite versatile: Note that each vertex from $J_1$ can be connected to $r$ by a red path of length two and to $b$ by a blue edge.  We claim that $J_1$ is large, namely,
\begin{equation}
    \text{\label{claim:Joker:large} $|J_1|\ge pn/100$.}
\end{equation}
To see~\eqref{claim:Joker:large}, we start by 
setting $L:=N_B(b)\setminus J_1$. Then
\[e_R(L,N_R(r))\leq |L|\cdot\frac{p^2n}{25}\le 
|N_B(b)|\cdot\frac{(1-\varepsilon)p^2n}{24}
\le \frac{p}{8} |N_R(r)| |N_B(b)|,\]
where we used~\ref{lem:i} and the fact that $r\in V_R$, which implies that $|N_R(r)| \geq (1-\eps)pn/3$.
Thus, by~\eqref{eq:red:edges}, we have 
\[e_R(J_1,N_R(r))\ge p|N_R(r)| |N_B(b)|/8.\] 
As $|N(v)\cap N(r) | \le (1+\eps)p^2n$ for every $v\in V$, we conclude that 
\[|J_1|\ge\frac{p|N_R(r)| |N_B(b)|}{(1+\eps)8p^2n}\ge \frac{pn}{100},\]
where in the last inequality we used that $d_R(r),d_B(b) \geq (1-\eps)pn/3$.  This proves~\eqref{claim:Joker:large}.

Next, set $$Z_2= \{ z \in V_2 \setminus (N_R(r) \cup \{b\} ) : |N(z,J_1)| \geq {p^2n}/{200}\}$$ and set  $K_2= V_2 \setminus (N_R(r) \cup Z_2 \cup \{b\})$. 
Because of~\ref{lem:iii} and~\eqref{claim:Joker:large}, we have 
 \begin{align}\label{K2}
  |K_2| & \leq {100}/{p}.
\end{align}
For  $z \in Z_2$, we set
\[ \rho(z) = \begin{cases} \red & \text{if} \quad | N_R(z,J_1)| \geq \dfrac{p^2n}{400},\\
\blue & \text{otherwise.}
   \end{cases}\]
   Note that each vertex in $Z_2$ has at least ${p^2n}/{400}$ neighbours in $J_1$ in at least one of the colours. So, if $\rho(z) =\blue$, then $| N_B(z,J_1)| \geq p^2n/400$. 

Up to this point, we have assigned a colour to every vertex in $V(G)$ except for those in $V_1 \setminus ( N_B(b) \cup \{ r\})$ and in $K_2$. Moreover, every vertex $v$ sends an edge of colour $\rho(v)$  to either $b$ or $r$ (according to $\rho(v)$), or many edges of the same colour to $J_1$. In order to be able to define $T_1$ and $T_2$ as disjoint trees, we will need to assign colours to the vertices of $J_1$ so that the vertices from $Z_2$ can all be connected with monochromatic paths to either $r$ or $b$.
For this reason, we will define $\rho (v)$ for the vertices of $J_1$ as follows:
\begin{align*}
&\text{For each $v\in J_1$, define $\rho(v)$ by choosing a colour from $\{\red,\blue\}$}\\ &\text{independently and uniformly at random.}
\end{align*}
Let $Z_2^r$ be the set of all vertices $v\in Z_2$ with $\rho(v)=\red$, and set $Z_2^r=Z_2\setminus Z_2^r$. Note that for each $v \in Z_2^r$, the probability that $\rho(w)=\blue$ for each vertex  $w\in N_R(v,J_1)$  is
\[2^{-|N_R(v,J_1)|} \leq 2^{-p^2n/400}=o(n^{-1}).\]
The same argument holds for  $v \in Z^b_2$, which proves  that 
\begin{equation}\label{Z2good}
     \text{w.h.p.~each $v \in Z_2$ has at least one neighbour of colour $\rho (v)$ in $J_1$.}
\end{equation}
By~\eqref{Z2good}, there is a red tree 
 $T_1$ with vertex set $\rho^{-1}(\red)$, such that the vertices of~$Z_2^r$ are leaves of $T_1$.  We define a blue tree  $T_2$ analogously, with $V(T_2) = \rho^{-1}(\blue)$. Note that by our choice of $\rho$, 
\begin{equation}
  \text{$T_1$ and $T_2$ are disjoint and
cover all of 
$( N_B(b) \cup \{ r\})\cup (V_2\setminus K_2)$. }
\end{equation}
We still need to cover most of the vertices in $V_1 \setminus(N_B(b) \cup \{r \} )$. For this, we will consider two cases.\smallskip

\noindent\textbf{Case 1.} There is a vertex $\tilde v \in V_1 \setminus (N_B(b) \cup \{r \})$ such that 
\[|d_B(\tilde v,Z_2^r)| \geq {pn}/{100}\quad\text{or}\quad|d_R(\tilde v,Z_2^b)| \geq {pn}/{100}.\]
We will assume that $|N_B(\tilde v, Z_2^r)| \geq {pn}/{100}$, as the other case is completely analogous (with all colours switched). So for $J_2= N_B(\tilde v,Z_2^r)$ we have $$|J_2| \geq {pn}/{100}.$$ We will use $\tilde{v}$ as the root of a third monochromatic tree. This tree will be blue, and in order to define it, we will use a function $\rho'$.

Let $Z_1= \{ x \in V_1 \setminus (N_B(b) \cup \{r, \tilde{v} \} ) : |N(x,J_2)| \geq {p^2n}/{200}\}.$
    We  define, for each $x\in Z_1$,
 \[\rho'(x) =  \begin{cases} 
      \red & \text{if }| N_R(x)\cap J_2| \ge \frac{p^2n}{400}, \\
      \blue & \text{otherwise.}
    \end{cases}\] 
    As every $x \in Z_1$ has at least $\frac{p^2n}{400}$ neighbours in $J_2$ having the same colour, vertices with blue assignment have many blue neighbours in $J_2$.

    Now, 
for each $v\in J_2$, we choose $\rho'(v)\in\{\red,\blue\}$ uniformly at random, making all choices independently from each other. Similarly as above, 
\begin{equation}\label{Z1good}
     \text{w.h.p.~each $x \in Z_1$ has at least one neighbour of colour $\rho' (x)$ in $J_2$.}
\end{equation}
Let $K_1= V_1 \setminus (N_B(b) \cup Z_1 \cup \{r, \tilde{v}\})$, and note that by~\ref{lem:iii} we have  
$|K_1|\le {100}/{p}.$
So, by~\eqref{K2}, we know that 
\[|K_1\cup K_2|\le \frac{200}{p}.\] 
We let $T_3$ be a blue tree with vertex set $\{\tilde v\}\cup \{v\in J_2\cup Z_1:\rho'(v)=\blue\}$. We let
$T'_1$ be the tree obtained from $T_1-\{v\in J_2:\rho'(v)=\blue\}$ by adding  as leaves all $x\in Z_1$ with $\rho' (x)=\red$. 
Then the three trees $T'_1$, $T_2$ and $T_3$ are vertex-disjoint and cover all of $V(G)\setminus (K_1\cup K_2)$, that is, they cover all but $200/p$ vertices from $G$, which is as desired.\\

\noindent\textbf{Case 2.} For every vertex $v \in V_1 \setminus (N_B(b) \cup \{r\})$, we have 
\[d_B(v,Z_2^r) \le\frac{pn}{100}\quad \text{and}\quad d_R(v,Z_2^b) \le \frac{pn}{100}.\]
Consider any vertex  $v \in  V_1 \setminus (N_b(b) \cup \{r \})$.
Note that by~\ref{lem:i}, \eqref{eq:pbound} and~\eqref{K2},  
   \begin{equation}\label{eq:degree:case2}d(v, Z_2)\ge d(v)-d(v,K_2)-d(v,N_R(r))-1\ge \frac{pn}{2},\end{equation} 
and therefore, $v$ either has at least $pn/10$ neighbours in $Z_2^r$ or at least $pn/10$ neighbours in $Z_2^b$. In the former case, we have 
$d_R(v,Z_2^r) \geq {pn}/{100}\ge 1,$
as $d_B(v,Z_2^r) \leq {pn}/{100}$.
In the latter case, we have $ d_B(v,Z_2^b) \geq pn/100\ge 1$. 

So, we can add each vertex  $v \in  V_1 \setminus (N_b(b) \cup \{r \})$ as a leaf to one of the trees $T_1$, $T_2$, and the obtained two trees 
cover all but at most $100/p$ vertices of $G$.\end{proof}

\section{Graphs of large minimum degree}
\label{sec:m}

The whole section is devoted to the 
proof of Theorem~\ref{theorem:bipartite}.

Let $n_0$ be sufficiently large and let $G$ be a bipartite graph with parts $V_1$ and $V_2$, each of size $n\ge n_0$, and with minimum degree $\delta(G)\ge ({13}/{16}+\delta)n$. Let $V_R$ and $V_B$ be the set of vertices with at least $({9}/{16}+{3\delta}/4)n$ neighbours in red and blue, respectively. If one of these sets is empty, say $V_B$, then the red graph has minimum degree at least 
\[\left(\frac{13}{16}+\delta\right)n-\left(\frac{9}{16}+\frac{3\delta}{4}\right)n>n/4,\]
and thus $G$ can be covered using at most $3$ red components, and we are done. So we can assume both sets $V_R$, $V_B$ are non-empty.
This implies that there are vertices $r\in V_R$ and $b\in V_B$ from different partition classes of $G$. 

Suppose without loss of generality that $r\in V_1$ and $b\in V_2$, and   choose sets $X\subset N_R(r)\setminus\{b\}$ and $Y\subset N_B(b)\setminus\{r\}$ such that $|X|=|Y|=({9}/{16}+{\delta}/{2})n$. Note that because of our condition on $\delta(G)$, for every vertex $v\in Y$ we have
\[ d(v,X)\ge \left(\frac{9}{16}+\frac{\delta}{2}\right)n-\left(\frac{3}{16}-\delta\right)n\ge \left(\frac 38+\delta\right)n,\]
and thus we have $e(X,Y)\ge(3/8+\delta)n|Y|.$
Therefore, there are at least $({3}/{16}+{\delta}/{2})n|Y|$ edges of the same colour between $X$ and $Y$. Say this colour is red, and let $J_Y\subset Y$ be the set of all vertices $v\in Y$ satisfying $d_R(y,X)\ge \delta n/100$. (If the most popular colour between $X$ and $Y$ was blue, then we take $J_X\subset X$ instead, and accordingly change the rest of the proof.) 

We claim that $|J_Y|\ge {3n}/{16}$. Indeed, otherwise, we would have that 
\[\left(\frac{3}{16}+\frac{\delta}{2}\right)n|Y|\le e_R(X,Y)\le \frac{3n}{16}\cdot |X|+|Y|\cdot \frac{\delta n}{100}, \]
which is a contradiction as $|X|=|Y|$.  Therefore, and because of our condition on $\delta(G)$, every vertex $v\in V_2$ satisfies 
\begin{equation}\label{equation:degree:jk}d(v,J_Y)\ge \frac{3n}{16}-\left(\frac{3n}{16}-\delta\right)n=\delta n.\end{equation}
Before starting assigning colours to vertices as in the proof of Theorem~\ref{thm:almost-cover}, we need to shrink the size of $X$ for reasons that will become clear later in the proof. Let $p\in  (0,1)$ be a small enough constant and take a subset $X'\subset X$ so that each vertex $x\in X$ is included in $X'$ with probability $p$ and all choices are made independently. Note that $\mathbb E|X'|=p|X|=
({9}/{16}+\delta/2)pn$
 and, by definition of $J_Y$, each vertex $y\in J_Y$ satisfies $\mathbb E [d_R(y,X')]\ge {\delta pn}/{100}.$
So, by Lemma~\ref{lemma:chernoff}, with probability $1-o(1)$ we have that
\begin{enumerate}[label=$\bullet$]
\item $\frac{pn}{2}\le |X'|\le pn$, and
    \item for each vertex $y\in J_Y$, $d_R(y,X')\ge \delta pn/200$.
\end{enumerate} 
Let $W=V_2\setminus (\{b\}\cup X')$. Now, as in the proof of Theorem~\ref{thm:almost-cover}, we will choose a preferred colour $\rho(v)$ for each vertex~$v$.  We set 
\[\rho(v)=\begin{cases}\red &\text{if }v\in X'\cup \{r\}, \text{ and}\\
\blue &\text{if }v\in (Y\cup \{b\})\setminus J_Y,
\end{cases}\]
and  for each $v\in J_Y$, we choose $\rho(v)\in \{\red,\blue\}$ uniformly at random. 
Let us next define $\rho(v)$ for each $v\in W$. Because of~\eqref{equation:degree:jk}, for each  $v\in W$ there is a colour $c_v\in\{\red,\blue\}$ such that $v$ has at least $\delta n/2$ neighbours in $J_Y$ in colour $c_v$, in which case we let $\rho(v)=c_v$. 

Use Lemma~\ref{lemma:chernoff} and a union bound to see that the following property holds with high probability.
\begin{enumerate}[label=]
    \item For each vertex $w\in W$ there are at least $\delta n/8$ vertices $v\in N(w,J_Y)$ in colour $\rho(w)$ such that $\rho (v)=\rho(w)$.
\end{enumerate} 
Set $W_B=\rho^{-1}(\blue)\cap W$ and $W_R=\rho^{-1}(\red)\cap W$, and observe that every vertex in $W_B$ (resp. $W_R)$ can be connected to $b$ (resp. to $r)$ by a path in colour blue (resp. red). As $|X'|\le pn$ and $p$ is sufficiently small, we have $|W|=n-|X'|-1\ge n-2pn\ge 0.9n$, and thus, at least one of $W_R$, $W_B$ has at least $0.4n$ vertices. We assume $|W_B|\ge 0.4n$ (the other case is analogous).

Let $U=V_1\setminus (\{r\}\cup Y)$, and observe that every vertex $v\in U$ satisfies 
\begin{equation}\label{dvWb}
    d(v,W_B)\ge 0.4n-\left(\frac{3}{16}-\delta\right)n\ge \left(\frac{3}{16}+\delta\right)n.
\end{equation}
If each vertex in $U$ has at least one blue neighbour in $W_B$, we can assign $\rho(u)=\blue$ for each $u\in U$. Then both $\rho^{-1}(\red)$,  $\rho^{-1}(\blue)$ span vertex-disjoint monochromatic connected components, and we are done. So, we can assume that there is a vertex $u_0\in U$ with no blue neighbour in $W_B$ and thus $d_R(u_0,W_B)\ge (3/16+\delta)n$ by~\eqref{dvWb}.

Pick a set $J_X\subset N_R(u_0,W_B)$ of size $(3/16+\delta)n$, and note that each vertex $u\in U$ satisfies
\[d(u,J_X)\ge (3/16+\delta)n-(3/16-\delta)n\ge 2\delta n.\]
In particular, for each vertex $u\in U$ there is a colour $c_u$ such that $u$ has at least $\delta n$ neighbours in colour $c_u$ in $J_X$. We set $\rho(u)=c_u$. Now, we randomly re-colour each vertex in $J_X$ by choosing $\rho(v)\in\{\red,\blue\}$ uniformly at random for each $v\in J_X$. Since for each $u\in U$, the expected number of vertices in $J_X$ with colour $\rho(u)$ is at least $\delta n$,  by Lemma~\ref{lemma:chernoff} we deduce that w.h.p.~each vertex $u\in U$ has at least $\delta n/2$ neighbours $v\in J_X$ such that $\rho(v)=\rho(u)$. 
So, the vertices with preference $\rho(v)=$ red span a red subgraph having at most two connected components, and the remaining vertices (those with with preference $\rho(v)=$ blue) span a blue connected subgraph, and we are done.

\section{Concluding remarks}

\subsection{Variants of the Gy\'arf\'as-Lehel conjecture}\label{sec:gyalehvar}

Recall that in Conjecture~\ref{conjecture:GL}, 
Gy\'arf\'as and Lehel 
conjectured that
$\tc_r(K_{n,m})\le 2r-2$ for $n,m\ge 1$ and $r\ge 2$. In~\cite{chen2012around} and~\cite{gya77} one can find examples of $r$-colourings of $K_{n,m}$ with $n=r-1$ and $m=r!$ which show that the bound of $2r-2$ in Conjecture~\ref{conjecture:GL} is best possible.

We do not know of similar examples for balanced complete bipartite graphs. That is, we do not know whether there is,  for some $n$, an $r$-colouring of $K_{n,n}$ which does not allow for a  covering with less than $2r-2$ trees. So, although we are inclined to believe the opposite, it is possible that $\tc_r(K_{n,n})$ is lower than $2r-2$. Thus motivated, we propose the following problem.
\begin{problem}\label{prob:unbal}
    Determine $\tc_r(K_{n,n})$ for all $n,r$. 
\end{problem}
Also, it seems that nothing is known about tree partitioning in complete bipartite graphs, which is why we suggest the following problem.
\begin{problem}
    Determine $\tp_r(K_{n,m})$ for all $n,m,r$. 
\end{problem}
We believe that in analogy to the complete graph case, it could be true that $\tp_r(K_{n,m})=\tc_r(K_{n,m})$.
\subsection{Unbalanced random bipartite host}\label{sec:unbalRandom}

While we believe the bipartite tree covering number $\tc_r(K_{n,m})$ should be the same if restricted to the instances where $n=m$, this might not be the case. Then,
our main result, Theorem~\ref{thm:almost-cover} corresponds to Problem~\ref{prob:unbal}, and the random version of Conjecture~\ref{conjecture:GL} should be more general. Nevertheless, our proof, up to minor modifications, yields the same conclusions for the host $G(n,m,p)$ when $m=\Theta(n)$.
 \subsection{More colours in the random setting}\label{sec:moreColours}
 
Given Theorem~\ref{thm:almost-cover}, a natural next step would be to find a random analogue of the Gy\'arfas--Lehel conjecture for more colours. In this direction, we propose the following question.

\begin{question}\label{question:covering:r}For $r\ge 2$, determine the threshold for $\tc_r(G(n,n,p))\le 2r-1$.
\end{question}

As in the two colour case, we believe that the answer to Question~\ref{question:covering:r} should be the same as in the non-bipartite setting. In particular, given the result of Bradra\v{c} and Buci\'c~\cite{bradavc2023covering}, we believe that the threshold for 3 colours should be $p=(\log n/n)^{1/4}$. 
\begin{conjecture}There exists a constant $C$ such that if $p\ge C(\log n/n)^{1/4}$ and $G\sim G(n,n,p)$, then w.h.p.~$\tc_3(G)\le 5$.
\end{conjecture}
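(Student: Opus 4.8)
The plan is to marry the argument behind Theorem~\ref{thm:almost-cover}(ii) (itself a bipartite adaptation of Kohayakawa--Mota--Schacht~\cite{kohayakawa2019monochromatic}) with the iterated ``joker'' idea of Bradra\v{c} and Buci\'c~\cite{bradavc2023covering}, which fixed the threshold $(\log n/n)^{1/4}$ for $\tc_3(G(n,p))\le 3$, now carried out at the density scale $p^2n$ instead of $pn$. By monotonicity we may assume $p=C(\log n/n)^{1/4}$, so that $p^2n=\Theta((n\log n)^{1/2})\gg\log n$. The first step is a ``one level deeper'' analogue of Lemma~\ref{lemma:prelim1}: w.h.p.\ all degrees are $(1\pm\eps)pn$; any two vertices on a common side have $(1\pm\eps)p^2n$ common neighbours; for $U,W$ (on either the same or opposite sides) with $|U|\ge 1/p^2$ and $|W|\ge p^2n$ one has $e(U,W)\ge p|U||W|/2$, and all but $O(1/p^2)$ vertices have $\Omega(p^3n)$ neighbours in any fixed $W$ of size $\ge p^2n$; and every spanning subgraph of minimum degree $\ge(1/2+\eps)pn$ is connected. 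As in the appendix, these are Chernoff bounds, the union bounds now using $p^2n\gg\log n$.

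Fix a $3$-colouring with no colour spanning a connected spanning subgraph. For $c\in\{1,2,3\}$ let $V_c$ be the set of colour-$c$-heavy vertices (those with $d_c(v)>\tfrac14 d(v)$, say). A short case analysis shows that if some $V_c$ is empty then the remaining two colours already dominate the degree of every vertex, the problem reduces to two colours, and (a full-cover strengthening of) Theorem~\ref{thm:almost-cover}(ii) gives at most $3\le5$ trees. So assume $V_1,V_2,V_3\ne\emptyset$. Two of the three representatives lie on a common side, so we may fix roots $r_1,r_2\in V_1$ and $r_3\in V_2$ (the other placements being symmetric) and aim for three ``main'' monochromatic trees rooted at $r_1,r_2,r_3$ together with at most two ``cleanup'' trees.

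The core construction follows the proof of Theorem~\ref{thm:almost-cover}(ii). One grows a partial colouring $\rho$ with $\rho(r_c)=c$, putting $\rho(v)=c$ whenever $v$ can be joined to $r_c$ by a colour-$c$ path, and witnessing such paths through joker sets: a first-level set $J_c^{(1)}$ of vertices with $\Omega(p^2n)$ colour-$c$ neighbours in $N_c(r_c)$ (a colour-$c$ path of length at most two), and --- the new ingredient for three colours --- a second-level set $J_c^{(2)}$ of vertices with $\Omega(p^3n)$ colour-$c$ neighbours in $J_c^{(1)}$ (length at most four). Majority-colour counts of the edges between the relevant neighbourhoods, as in~\eqref{eq:red:edges}--\eqref{claim:Joker:large}, force the joker sets one actually uses to have size $\ge p^2n$. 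One then colours every joker vertex independently and uniformly at random from $\{1,2,3\}$; a vertex with tentative assignment $c$ has $\Omega(p^2n)$, or $\Omega(p^3n)$, colour-$c$ joker neighbours, so it loses all of them to other colours with probability $3^{-\Omega(p^2n)}=o(1/n)$, and a union bound shows that w.h.p.\ every $\rho$-assigned vertex keeps a colour-$\rho(v)$ neighbour of matching $\rho$-value in a joker set. The classes $\rho^{-1}(1),\rho^{-1}(2),\rho^{-1}(3)$ then span pairwise-disjoint monochromatic trees $T_1,T_2,T_3$ covering all of $V(G)$ outside a leftover set $K$ on the root-free side, with $|K|=O(1/p^2)=o(n)$ by the expansion property above.

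It remains to cover $K$ with two further monochromatic trees; since we need only a cover and not a partition, these are allowed to overlap $T_1,T_2,T_3$. Each $v\in K$ has a colour $c_v$ with $d_{c_v}(v)\ge pn/4$, whose neighbourhood meets the large colour-$c_v$ structures built above, so one can branch exactly as in Case~1 and Case~2 of the proof of Theorem~\ref{thm:almost-cover}(ii): either some vertex of $K$ becomes the root of a fourth monochromatic tree reached through a fresh joker set and absorbs a positive fraction of $K$ (possibly iterated once more for a fifth tree), or every remaining vertex of $K$ attaches directly to $T_1$, $T_2$ or $T_3$; two passes of this dichotomy exhaust $K$. The main obstacle is precisely this final clean-up: upgrading the almost-cover to a cover of all of $V(G)$. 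Even the two-colour analogue (Conjecture~\ref{conjecture:2-colours}) is open, and the technique here most naturally delivers only an almost-cover by five trees; removing the last $O(1/p^2)$ vertices, together with taming the considerably larger case analysis created by three roots and two joker levels and checking the deeper pseudorandomness at scale $p^2n$, is where the real difficulty lies.
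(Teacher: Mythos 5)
This statement is Conjecture 5.4 in the concluding remarks --- the paper does not prove it, and indeed the authors propose it precisely because they do not know how to close it. There is thus no ``paper proof'' to compare your proposal against, and your proposal is a sketch of a programme rather than a proof.

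Your own text already names the genuine gap: the machinery of Theorem~\ref{thm:almost-cover}(ii) (and of Kohayakawa--Mota--Schacht and Bradra\v{c}--Buci\'c on which it is modelled) naturally produces an \emph{almost}-cover, leaving $O(1/p)$ (here $O(1/p^2)$) uncovered vertices, and you concede that ``upgrading the almost-cover to a cover of all of $V(G)$'' is ``where the real difficulty lies.'' That is exactly right, and it means the proposal does not prove the conjecture. Two further points compound this. First, your reduction when some $V_c=\emptyset$ invokes ``(a full-cover strengthening of) Theorem~\ref{thm:almost-cover}(ii),'' i.e.\ the $1$-statement of Conjecture~\ref{conjecture:2-colours}, which is itself open; so even the degenerate case rests on an unproved result. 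Second, the cleanup step --- ``two passes of this dichotomy exhaust $K$'' --- is asserted, not argued: Case~1/Case~2 of Theorem~\ref{thm:almost-cover}(ii) reduce, but do not eliminate, the residual set, and the extra two trees you budget for are not shown to absorb all of $K$. (A minor slip as well: in a bipartite graph $e(U,W)=0$ whenever $U,W$ lie on the same side, so the expansion property you state ``for $U,W$ on either the same or opposite sides'' needs to be rephrased in terms of common neighbourhoods or second-neighbourhood edges.) The high-level route --- roots $r_1,r_2,r_3$, two levels of jokers at scales $p^2n$ and $p^3n$, random joker recolouring --- is a plausible attack and is presumably close to what the authors have in mind in posing the conjecture, but as written it is an outline with the main obstacle left open, not a proof.
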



\bibliographystyle{abbrv}
\bibliography{ref}

\appendix
\section{Proof of Lemma~\ref{lemma:prelim1}}
We let  $C\gg \varepsilon^{-2}$.
\begin{enumerate}[label=(\roman{enumi}):]
    \item   As $d(u)\sim \bin(n,p)$ for every  $u\in V_1$, we can use Chernoff's bound (Lemma~\ref{lemma:chernoff}), to see that the probability that there is a vertex $u$ with $|d(u)-pn|\ge\varepsilon pn$ is bounded by 
    $2ne^{-\varepsilon^2pn/3}=o(1).$
    Moreover, for  distinct  $u,v\in V_1$, we have that $|N(u)\cap N(v)|\sim \bin(n,p^2)$ and thus, by Chernoff's bound, the probability that are such $u,v$ with  $\left||N(u)\cap N(v)|-p^2n\right|\ge \varepsilon pn^2$ is bounded by
    $2n^2e^{-\varepsilon^2 p^2n/3}=o(1).$

    \item Fix $U\subset V_1$ and $W\subset V_2$, with $|U|\ge 100/p$ and $|W|\ge pn/100$. Note that $e(U,W)\sim \bin(|U||W|,p)$ and so, by Chernoff's bound (Lemma~\ref{lemma:chernoff}), we have 
    \[\mathbb P(e(U,W)<p|U||W|/2)\le e^{-p|U||W|/8}.\]
    Therefore, the probability that there exist a vertex $v\in V_2$, and subsets $U\subset N(v)$ and $W\subset V_2$, with $|U|\ge pn/100$ and $|W|\ge 100/p$, such that $e(U,W)<p|U||W|/2$, is bounded by  
    \begin{eqnarray*}\sum_{u\ge \frac{pn}{100} , w\ge\frac{100}p}n\binom{n}{u}\binom{n}{w}p^ue^{-puw/8}&\le& \sum_{{u\ge \frac{pn}{100} , w\ge\frac{100}p}}n\left(\frac{enp}{u}\right)^{u}\left(\frac{en}{w}\right)^{w}\\
    &\le&\sum_{{u\ge \frac{pn}{100} , w\ge\frac{100}p}}ne^{5u+w\log n-puw/8}\\
    &\le&\sum_{{u\ge \frac{pn}{100} , w\ge\frac{100}p}}ne^{-uw/100}\\&\le& n^3e^{-n/100}=o(1),\end{eqnarray*}
    where we used that $puw/20\ge 5u$ and $puw/20\ge pn^2w/2000\ge  w\log n$.
    
    \item Let $W$ be the set of those vertices $w\in V_2$ with $d(w,U)<p^2n/200$. Since   by (ii),
    \[e(U,W)=\sum_{w\in U}d(w,U)<pn|W|/200\le p|U||W|/2,\]
 we deduce that $|W|<100/p$.
 
  \item Fix $0<\delta\ll \varepsilon$. We first note that $e(A,B)\le p|A||B|+\delta pn|A|$ for all sets $A$ and $B$ from opposite bipartition classes, both of size at least $pn/2$. Indeed, given such sets $A$ and $B$, by Lemma~\ref{lemma:chernoff} and since $pn\gg 1/p$, we have that  
  \[\mathbb P(e(A,B)\ge p|A||B|+\delta pn|A|)\le e^{-(\delta n/|B|)^2p|A||B|/2}\le e^{-\delta^2pn|Y|/2}=o(2^{2n}).\]

  Let $U$ be a component of $H$. Set $X=V_1\cap U$ and $Y=V_2\cap U$. By (i) we have $|X|,|Y|\ge \delta (H)\ge pn/2$, and therefore, 
  \[p|X||Y|+\delta pn|X|\ge e_H(X,Y)=\sum_{x\in X}d_H(x)\ge (1/2+\varepsilon)pn|X|.\]
  Hence $|Y|\ge (1/2+\varepsilon/2)n$, and similarly, we see that $|X|\ge (1/2+\varepsilon/2)n$. As this is true for any component $U$, $H$ is connected. 
\end{enumerate}
\end{document}